\documentclass{article}

\usepackage[utf8]{inputenc}
\usepackage{amsmath}
\usepackage{amsthm}
\usepackage{amssymb}
\usepackage{amscd}
\usepackage[all]{xy} 
\usepackage{latexsym}
\usepackage{extpfeil}
\usepackage{graphicx}
\usepackage{changebar,color}
\usepackage{tikz-cd}
\usepackage{bbold}
\usepackage[T1]{fontenc}
\usepackage[english]{babel}
\usepackage{tabularx}
\usepackage{comment}
\usepackage{enumitem}

\newtheorem{definition}{Definition}[section]

\newtheorem{lemma}[definition]{Lemma}

\newtheorem{theorem}[definition]{Theorem}

\theoremstyle{definition}\newtheorem{rema}[definition]{Remark}

\def\Hom{\mathrm{Hom}}
\def\id{\mathrm{id}}

\def\ch{\mathrm{ch}}
\def\KL{\mathrm{KL}}

\def\cC{\mathcal{C}}

\def\cD{\mathcal{D}}

\def\cY{\mathcal{Y}}

\title{The Kazhdan-Lusztig category of $\mathfrak{osp}_{1|2n}$ at irrational levels}
\author{Thomas Creutzig\footnote{Department Mathematik, Friedrich-Alexander Universit\"at Erlangen-N\"urnberg, 
91058 Erlangen, Germany\\
\textit{E-mail}: \texttt{thomas.creutzig@fau.de}},\, Robert McRae\footnote{Yau Mathematical Sciences Center, Tsinghua University, Beijing 100084, China\\
\textit{E-mail}: \texttt{rhmcrae@tsinghua.edu.cn}}\,\, and Jinwei Yang\footnote{School of Mathematical Sciences, Shanghai Jiao Tong University, Shanghai 200240, China\\
\textit{E-mail}: \texttt{jinwei2@sjtu.edu.cn}}}
\date{}

\begin{document}

\maketitle

\begin{abstract}
    We prove the Kazhdan-Lusztig correspondence for the  Lie superalgebra $\mathfrak{osp}_{1|2n}$ at irrational levels, that is, we show the category $\KL_k^{\rm ev}(\mathfrak{osp}_{1|2n})$ of finite-length even ordinary modules for the affine vertex operator superalgebra of $\mathfrak{osp}_{1|2n}$ at level $k \in \mathbb{C} \setminus \mathbb{Q}$ is braided tensor equivalent to the category of finite-dimensional even weight modules for the quantum group of $\mathfrak{osp}_{1|2n}$ at parameter $q = e^{\pi i/(2k+2n+1)}$. We also prove that $\KL_k^{\rm ev}(\mathfrak{osp}_{1|2n})$ is braided tensor equivalent to the category $\KL_\ell^{\rm ns}(\mathfrak{so}_{2n+1})$ of finite-length ordinary modules with non-spinorial top level for the affine vertex operator algebra of $\mathfrak{so}_{2n+1}$ at level $\ell$ such that
    \[
     \frac{1}{\ell+ 2n-1} = \frac{1}{2k+2n+1} + 1 \ \ ({\rm mod}\  2\mathbb Z).
    \]
Consequently, by gluing vertex operator (super)algebras via tensor categories, we construct a few new families of simple conformal vertex (super)algebras, including the mixed kernel VOAs that were the missing ingredient for proving certain Feigin-Frenkel type dualities in previous work of the first-named author with Linshaw, Nakatsuka, and Sato.
\end{abstract}

\tableofcontents

\section{Introduction}

Representations of vertex operator algebras (VOAs) and of quantum groups are two natural sources of braided tensor categories. Since understanding VOA representation theory can be difficult, one major objective is to establish braided tensor equivalences with representation categories of quantum groups. Such equivalences are called Kazhdan-Lusztig correspondences after the pioneering work \cite{KL1, KL2, KL3, KL4} for the category of finite-length ordinary modules for the affine Lie algebra of a simple Lie algebra at a shifted level $k+h^\vee\notin\mathbb{Q}_{\geq 0}$. One now wants to generalize in various directions: 
\begin{enumerate}
    \item To include positive rational shifted levels, so far done for $\widehat{\mathfrak{sl}}_2$ \cite{MY}.
    \item To modules for affine Lie superalgebras, so far done for $\widehat{\mathfrak{gl}}_{1|1}$ \cite{CLR, CN}.
    \item Beyond the category of ordinary modules; for example, there now is a correspondence between weight modules of $\widehat{\mathfrak{sl}}_2$ at an admissible level with the quantum supergroup of type $\mathfrak{sl}_{2|1}$ \cite{CL}.
    \item To modules for affine $W$-algebras, so far done for ordinary modules for $W$-algebras of simply-laced simple Lie algebras at irrational levels \cite{CDN}.
    \item Beyond affine VOAs and $W$-algebras; so far there are correspondences for the singlet and triplet VOAs with (unrolled) small quantum groups of $\mathfrak{sl}_2$ \cite{GN, CLR2, CLR}.
\end{enumerate}  

These generalizations typically involve non-semisimple braided tensor categories, but there is one family of simple Lie superalgebras, of type $\mathfrak{osp}_{1|2n}$, that behave much like simple Lie algebras. Especially, every finite-dimensional $\mathfrak{osp}_{1|2n}$-module is completely reducible. In fact, forgetting parity, the category of finite-dimensional $\mathfrak{osp}_{1|2n}$-modules is equivalent to the category of finite-dimensional non-spinorial $\mathfrak{so}_{2n+1}$-modules \cite{Rittenberg:1981fm}.
The purpose of this note is to observe that this equivalence persists in the affine setting. In particular, we establish the Kazhdan-Lusztig correspondence for $\widehat{\mathfrak{osp}}_{1|2n}$ at any irrational level. 

\subsection{The main result}

For $\mathfrak{g}$ a simple Lie algebra or $\mathfrak{osp}_{1\vert 2n}$, let $\KL_k(\mathfrak{g})$ denote the Kazhdan-Lusztig category of finite-length ordinary modules for the universal affine vertex operator (super)algebra $V_k(\mathfrak{g})$. If $k\in\mathbb{C}\setminus\mathbb{Q}$, then $\KL_k(\mathfrak{g})$ is semisimple with simple objects given by the Weyl modules induced from finite-dimensional simple $\mathfrak{g}$-modules. Thus the simple objects of $\KL_k(\mathfrak{g})$ are indexed by the dominant integral weights of $\mathfrak{g}$. Similarly, the category $\mathcal{C}_q(\mathfrak{g})$ of finite-dimensional weight modules for the quantum group $U_q(\mathfrak{g})$, with $q$ not a root of unity, is semisimple with simple objects labeled by dominant integral weights of $\mathfrak{g}$. 

In this paper, we take $\mathfrak{g}=\mathfrak{so}_{2n+1},\mathfrak{osp}_{1\vert 2n}$. The set of dominant integral weights $P^+$ of $\mathfrak{osp}_{1\vert 2n}$ can be identified with the set of non-spin dominant integral weights of $\mathfrak{so}_{2n+1}$. Also, $P^+$ has a natural $\mathbb{Z}/2\mathbb{Z}$-grading, so we say a simple module in $\KL_k(\mathfrak{osp}_{1\vert 2n})$ or $\mathcal{C}_q(\mathfrak{osp}_{1\vert 2n})$ is even if its highest weight vector has the same parity as its highest weight $\Lambda\in P^+$. Then our main result is the following braided tensor equivalences:
\[
\begin{tikzcd}
\KL_\ell^{\rm ns}(\mathfrak{so}_{2n+1}) \arrow[r, leftrightarrow, "\cong"] \arrow[d, leftrightarrow, "\cong"']
  & \mathcal{C}_{\zeta}^{\rm ns}(\mathfrak{so}_{2n+1}) \\
\KL_k^{\rm ev}(\mathfrak{osp}_{1|2n}) \arrow[r, leftrightarrow, "\cong"'] 
  & \mathcal{C}_{q}^{\rm ev}(\mathfrak{osp}_{1|2n}) \arrow[u, leftrightarrow, "\cong"']
\end{tikzcd}
\]
where ns and ev indicate the subcategories of non-spin and even representations. More precisely, we prove (see Theorems \ref{thm:ss-BTC} and \ref{thm:all} below):
\begin{theorem} \label{thm:intro}
If $k,\ell\in\mathbb{C}\setminus\mathbb{Q}$, then $\KL_k^{\rm ev}(\mathfrak{osp}_{1\vert 2n})$ and $\KL_\ell^{\rm ns}(\mathfrak{so}_{2n+1})$ are rigid braided tensor categories. Moreover, if
\[
\frac{1}{2k+2n+1} = \frac{1}{\ell+ 2n-1} + 1 \ \ ({\rm mod}\  2\mathbb Z)
\]
and we define $q,\zeta\in\mathbb{C}^\times$ by $q=e^{\pi i/(2k+2n+1)}$ and $\zeta=e^{\pi i/2(\ell+2n-1)}$  (so $q=-\zeta^2$),
then there are braided tensor equivalences
\begin{equation*}
\KL_k^{\rm ev}(\mathfrak{osp}_{1|2n}) \cong\mathcal{C}_{q}^{\rm ev}(\mathfrak{osp}_{1|2n}) \cong \mathcal{C}_{\zeta}^{\rm ns}({\mathfrak{so}_{2n+1}})\cong  \KL_\ell^{\rm ns}(\mathfrak{so}_{2n+1})
\end{equation*}
which match simple objects labeled by $\Lambda\in P^+$ in each category.
\end{theorem}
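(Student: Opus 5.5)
We will prove Theorem~\ref{thm:intro} by assembling four equivalences. At irrational level both $\KL$ categories are semisimple, with simple objects the Weyl modules labeled by $P^+$. We will therefore reduce everything to comparing semisimple braided tensor categories with the same fusion rules and matching braiding data. Rigidity is handled separately at the end.

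\textbf{Step 1: the two known Kazhdan--Lusztig equivalences.} For $\mathfrak{so}_{2n+1}$, the Kazhdan--Lusztig theorem \cite{KL1,KL2,KL3,KL4} gives a braided tensor equivalence $\KL_\ell(\mathfrak{so}_{2n+1})\cong\mathcal{C}_\zeta(\mathfrak{so}_{2n+1})$ with $\zeta=e^{\pi i/2(\ell+2n-1)}$; note $h^\vee=2n-1$, and the factor $2$ comes from the short-root normalization. This equivalence preserves the subcategories generated by simples with non-spin highest weights, since these are exactly the simples on which the center acts trivially. Restricting gives $\KL_\ell^{\rm ns}\cong\mathcal{C}_\zeta^{\rm ns}$, and in particular rigidity of $\KL_\ell^{\rm ns}(\mathfrak{so}_{2n+1})$.

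\textbf{Step 2: the quantum group comparison.} Next we show $\mathcal{C}_q^{\rm ev}(\mathfrak{osp}_{1|2n})\cong\mathcal{C}_\zeta^{\rm ns}(\mathfrak{so}_{2n+1})$ with $q=-\zeta^2$. This is a quantum version of the Rittenberg--Scheunert correspondence. The Drinfeld--Jimbo presentations of $U_q(\mathfrak{osp}_{1|2n})$ and $U_{\zeta}(\mathfrak{so}_{2n+1})$ coincide after the change of parameter, up to sign twists on the odd short root generator (the Lanzmann / Jantzen-type isomorphism of Clark--Wang, etc.). On even weight modules, where parity is determined by weight, the super sign rule can be absorbed into a twist by a $2$-cocycle on the weight lattice. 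This gives an equivalence of tensor categories, and we will check that it intertwines the $R$-matrices; the Cartan part $q^{(\lambda,\mu)}$ versus $\zeta^{(\lambda,\mu)}$ differs by a sign $(-1)^{p(\lambda)p(\mu)}$ that exactly compensates the super braiding sign.

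\textbf{Step 3: the affine side for $\mathfrak{osp}_{1|2n}$.} To get $\KL_k^{\rm ev}(\mathfrak{osp}_{1|2n})\cong\mathcal{C}_q^{\rm ev}(\mathfrak{osp}_{1|2n})$, we will not redo Kazhdan--Lusztig. Instead we will use a uniqueness argument for semisimple braided categories with the fusion ring of $\mathfrak{so}_{2n+1}$ non-spin representations. We first show, using Huang--Lepowsky--Zhang vertex tensor category theory, that $\KL_k^{\rm ev}$ is a braided tensor category. We then compute its fusion rules via Zhu-type / Frenkel--Zhu arguments, which yield the $\mathfrak{osp}_{1|2n}$ tensor product multiplicities since the level is generic. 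The key input is that the category is generated by the Weyl module of the standard $(2n+1)$-dimensional representation $V$, and that its braided structure is determined by the eigenvalues of the braiding on $V\otimes V$. These eigenvalues are $e^{\pi i(h_\nu-h_\lambda-h_\mu)}$ computed from conformal weights $h_\lambda=\frac{(\lambda,\lambda+2\rho)}{2(k+h^\vee)}$. We then apply Kazhdan--Wenzl / Tuba--Wenzl type classification for categories of type $B$ fusion rules, or rather its BMW-algebra version, with the twist matched to the quantum group. The condition $\frac{1}{2k+2n+1}\equiv\frac{1}{\ell+2n-1}+1 \pmod{2}$ is precisely what makes the ribbon twists $e^{2\pi i h_\Lambda}$ agree on both sides for every $\Lambda\in P^+$. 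Here $2k+2n+1=2(k+h^\vee)$ for $\mathfrak{osp}_{1|2n}$, and the $+1$ accounts for parity signs.

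\textbf{Main obstacle.} Rigidity of $\KL_k^{\rm ev}(\mathfrak{osp}_{1|2n})$, and hence the applicability of a Tuba--Wenzl classification, is the main difficulty. Our first attempt will be to prove rigidity of the generator $V$ directly, by showing the evaluation composed with coevaluation is a nonzero multiple of the identity. This uses the explicit intertwining operator computations (KZ equations for four-point functions with $V$) at generic level, in the style of Creutzig--Jiang--Orosz Hunziker--Ridout--Yang. Rigidity of all objects then follows since every simple is a summand of a power of $V$. Once rigidity holds, the classification identifies $\KL_k^{\rm ev}$ with $\mathcal{C}_\zeta^{\rm ns}(\mathfrak{so}_{2n+1})$ as braided categories. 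This gives the remaining equivalences by composing with Steps 1–2, matching simples labeled by $\Lambda$.
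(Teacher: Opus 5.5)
Your Step 3 has a genuine gap, located exactly where the content of the theorem lies. First, your formula $e^{\pi i(h_\nu-h_\lambda-h_\mu)}$ for the eigenvalues of $c_{\omega_1,\omega_1}$ omits signs. Second, your claim that $\frac{1}{2k+2n+1}\equiv\frac{1}{\ell+2n-1}+1 \pmod{2}$ is what makes the twists agree is false. For $\Lambda=\sum_i\lambda_i\epsilon_i\in P^+$ one has $(\Lambda|\Lambda+2\rho)_{\mathfrak{so}}=\sum_i\lambda_i(\lambda_i+2n-2i+1)\in 2\mathbb{Z}$. Hence the twists $q^{(\Lambda|\Lambda+2\rho)_{\mathfrak{so}}}$ of $M_k(\Lambda)$ and $\zeta^{2(\Lambda|\Lambda+2\rho)_{\mathfrak{so}}}$ of $N_\ell(\Lambda)$ agree for $q=\zeta^2$ just as well as for $q=-\zeta^2$; matching twists only detects $\frac{1}{2k+2n+1}-\frac{1}{\ell+2n-1}$ modulo $\mathbb{Z}$. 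Twists determine the braiding eigenvalues only up to sign, since $a_\Lambda^2=\theta_\Lambda\theta_{\omega_1}^{-2}$, whereas Tuba--Wenzl's classification needs the actual triple $(a_{2\omega_1},a_{\omega_2},a_0)$. When $q=\zeta^2$, the correct triples are $(-\zeta^2,\zeta^{-2},\zeta^{-4n})$ for $\KL_k^{\rm ev}(\mathfrak{osp}_{1|2n})$ and $(\zeta^2,-\zeta^{-2},\zeta^{-4n})$ for $\KL_\ell^{\rm ns}(\mathfrak{so}_{2n+1})$. These differ, and a label-matching braided equivalence must preserve the triple, so your argument as written would ``prove'' a false equivalence. (Your unsigned formula gives $(q,q^{-1},q^{-2n})$, which is not even of Tuba--Wenzl's form $(Q,-Q^{-1},\pm Q^{-2n})$ or $(iQ,-iQ^{-1},\pm iQ^{-2n})$.)

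The missing ingredient is the sign bookkeeping in the super skew-symmetry $\Omega(\mathcal{Y})(v_1,x)v_2=(-1)^{p(v_1)p(v_2)}e^{xL_{-1}}\mathcal{Y}(v_2,e^{\pi i}x)v_1$, applied to the leading term $\pi_\Lambda(v_1\otimes v_2)$ of ${\rm Pr}_\Lambda\circ\mathcal{Y}_\boxtimes$. Here $\pi_{2\omega_1}$ is symmetric, while $\pi_{\omega_2}$ is antisymmetric, its highest-weight vector being $f_{\alpha_1}v_{\omega_1}\otimes v_{\omega_1}-v_{\omega_1}\otimes f_{\alpha_1}v_{\omega_1}$. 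The map $\pi_0$ is the (super)symmetric invariant form, and for $\mathfrak{osp}_{1|2n}$ the vector $v_{\omega_1}$ is odd. This gives $(-q,q^{-1},q^{-2n})$ with $q=e^{\pi i/(2k+2n+1)}$ versus $(\zeta^2,-\zeta^{-2},\zeta^{-4n})$. These are equal precisely when $q=-\zeta^2$, which is the stated mod-$2$ condition; the ``$+1$'' comes from the odd parity of $v_{\omega_1}$, not from the twists.

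Two smaller points remain. Tuba--Wenzl do not explicitly say their equivalence respects the labels $\Lambda$, so ``matching simples labeled by $\Lambda$'' needs an argument. One way is to match the idempotents $\pi_\Lambda$, which lie in the span of $\id,c^{\pm1}_{\omega_1,\omega_1}$ when the eigenvalues are not roots of unity, and then induct on Young diagrams using the fusion rules for $X(\omega_1)\otimes X_P$. Also, rigidity is not the obstacle you expect. The fusion rules already make every object non-negligible of moderate growth, so Etingof--Penneys gives rigidity with no KZ analysis.

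With the corrected triples, the same classification also yields your Step 1 on the non-spin part. This avoids relying on the non-simply-laced Kazhdan--Lusztig theorem, parts of which are not documented for all levels. It likewise yields Step 2, where you must specify the $R$-matrix on $\mathcal{C}_q^{\rm ev}(\mathfrak{osp}_{1|2n})$: one standard choice gives the reversed triple $(-q^{-1},q,q^{2n})$.
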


Our main novel contribution is the first equivalence in this theorem, which is a Kazhdan-Lusztig correspondence for $\mathfrak{osp}_{1|2n}$ at irrational levels. We prove it using Tuba-Wenzl's classification of semisimple rigid braided tensor categories with non-spinorial $\mathfrak{so}_{2n+1}$ fusion rules \cite{Tuba-Wenzl}.
The middle tensor equivalence was already observed by Zhang in \cite{Zhang:1992b}, and the third is of course part of Kazhdan-Lusztig's work, though we point out that certain technical results in \cite{KL1, KL2, KL3, KL4} were proved under the assumption that $\mathfrak{g}$ is simply laced, so strictly speaking, applying them to $\mathfrak{g}=\mathfrak{so}_{2n+1}$ requires a generalization.


\subsection{Application to mixed kernel VOAs}

Theorem \ref{thm:intro} has an important application to a natural generalization of Feigin-Frenkel duality. Let $W^k(\mathfrak g, f)$ be the universal affine $W$-algebra at level $k$ associated to the simple Lie superalgebra $\mathfrak{g}$ and the even nilpotent element $f$, and let $f_{\text{prin}}$ be the principal nilpotent element. Then Feigin-Frenkel duality \cite{FF} is the isomorphism 
\[
W^k(\mathfrak g, f_{\text{prin}}) \cong W^\ell({}^L\mathfrak g, f_{\text{prin}}), \qquad r^\vee(k+h^\vee_{\mathfrak g})(\ell+h^\vee_{{}^L\mathfrak g}) =1,
\]
where $h^\vee_{\mathfrak g}$ is the dual Coxeter number of $\mathfrak g$, ${}^L\mathfrak g$ is the Lie algebra whose root system is the coroot system of $\mathfrak g$,  and $r^\vee$ is the lacing number of $\mathfrak g$. 

Feigin-Frenkel duality extends to many non-principal $W$-algebras \cite{CL2, CL1, CKLSS}, with two notable differences: the dual vertex operator superalgebra is typically a $W$-superalgebra, and the duality is not an isomorphism of VOAs but only of coset subalgebras. However, for so-called hook-type 
$W$-superalgebras \cite{CL2, CL1}, there is a convolution operation that directly sends a $W$-algebra to its dual $W$-superalgebra, as well as a dual convolution operation sending the $W$-superalgebra back to its dual 
$W$-algebra \cite{CLNS}. The convolution operation is useful because it also acts on modules, which is expected give block-wise equivalences of categories in general. So far, this has been worked out for the subregular $W$-algebras in types $A$ and $B$ \cite{CGNS}.

The convolution operation is a relative semi-infinite Lie algebra cohomology against a certain VOA whose existence was first conjectured in \cite{CG}, where they were called quantum geometric Langlands kernel VOAs due to their appearance in the context of $S$-duality. The first examples were constructed in \cite{CG, CGL}, and then many more by Moriwaki in \cite{Mor}. But there is one convolution operation that mixes $\mathfrak{so}_{2n+1}$ and $\mathfrak{osp}_{1|2n}$, and this requires the existence of mixed kernel VOAs (see \cite[Section 2]{CLNS}). 

In Theorem \ref{thm:mixed-kernel-VOAs} of this paper, we use Theorem \ref{thm:intro} and a superalgebra generalization of the gluing construction in \cite{CKM2} to prove the existence of mixed kernel VOAs (which, strictly speaking, are conformal vertex superalgebras in general):
Identifying the dominant integral weights $P^+$ of $\mathfrak{osp}_{1\vert 2n}$ with the non-spinorial ones of $\mathfrak{so}_{2n+1}$,
let $M_k(\Lambda)$ and $N_k(\Lambda)$ be the Weyl modules of $V_k(\mathfrak{osp}_{1|2n})$ and $V_k(\mathfrak{so}_{2n+1})$ of highest weight $\Lambda\in P^+$ and level $k\in\mathbb{C}\setminus\mathbb{Q}$.
Our result is that the following have simple conformal vertex (super)algebra structure:
\begin{equation*}
    \begin{split}
        A[\mathfrak{osp}_{1|2n}, \mathfrak{osp}_{1|2n}, m, k] &= \bigoplus_{\Lambda \in P^+} M_k(\Lambda) \otimes M_{k'}(\Lambda) \\ 
        A[\mathfrak{osp}_{1|2n}, \mathfrak{so}_{2n+1}, p, k] &= \bigoplus_{\Lambda \in P^+} M_k(\Lambda) \otimes N_{\ell}(\Lambda) \\ 
        A[\mathfrak{so}_{2n+1}, \mathfrak{so}_{2n+1}, m, \ell]^{\rm ns} &= \bigoplus_{\Lambda \in P^+} N_\ell(\Lambda) \otimes N_{\ell'}(\Lambda) \\ 
    \end{split}
\end{equation*}
for $k, \ell\in\mathbb{C}\setminus\mathbb{Q}$ and $m\in 2\mathbb{Z}$, $p\in 2\mathbb{Z}+1$, where $k'$, $\ell'$ are related to $k$, $\ell$ by
\[
\frac{1}{2k+2n+1} + \frac{1}{2k'+2n+1} = m =  \frac{1}{\ell+2n-1} + \frac{1}{\ell'+2n-1} 
\]
in the first and third cases, and $\ell$ is related to $k$ in the second case by
\[
\frac{1}{2k+2n+1} + \frac{1}{\ell+2n-1} = p.
\]
In particular, $A[\mathfrak{osp}_{1|2n}, \mathfrak{so}_{2n+1}, p, k]$ gives the mixed kernel VOAs, and the cases $p = \pm 1$ are the ones relevant for the Feigin-Frenkel-type duality via convolution.

\section{The Lie superalgebras $\mathfrak{osp}_{1|2n}$ and $\mathfrak{so}_{2n+1}$ and their representations}\label{finite}

We start from the correspondence between finite-dimensional representations of $\mathfrak{osp}_{1|2n}$ and $\mathfrak{so}_{2n+1}$. Consider the lattice $Q=\mathbb Z^n$ with orthonormal basis $\{ \epsilon_1, \dots, \epsilon_n\}$. Then the root systems are
\begin{equation}
    \begin{split}
        &\Delta(\mathfrak{osp}_{1|2n})= \Delta(\mathfrak{osp}_{1|2n})_{\bar{0}} \cup \Delta(\mathfrak{osp}_{1|2n})_{\bar{1}}, \\
        &\Delta(\mathfrak{osp}_{1|2n})_{\bar{0}} = \{ \pm \epsilon_i \pm \epsilon_j \mid 1 \leq i, j \leq n \}, \\
        &\Delta(\mathfrak{osp}_{1|2n})_{\bar{1}} = \{ \pm \epsilon_i \mid 1 \leq i \leq n \}, \\
        &\Delta(\mathfrak{so}_{2n+1}) = \{ \pm \epsilon_i \pm \epsilon_j \mid 1 \leq i < j \leq n \}  \cup 
        \{ \pm \epsilon_i \mid 1 \leq i \leq n \}. 
    \end{split}
\end{equation}
That is, the short, respectively long, roots of $\mathfrak{so}_{2n+1}$ are identified with the odd, respectively short even, roots
of $\mathfrak{osp}_{1|2n}$.
In particular,
\[
\Pi = \{ \alpha_1 = \epsilon_1 - \epsilon_2, \dots, \alpha_{n-1}= \epsilon_{n-1}-\epsilon_n, \alpha_n =\epsilon_n\}
\]
is a system of simple roots for both $\mathfrak{so}_{2n+1}$ and $\mathfrak{osp}_{1\vert 2n}$. 
The fundamental weights are
\[
\omega_i = \epsilon_1 + \dots + \epsilon_i, \qquad 1 \leq i  \leq n-1, \qquad
\omega_n = \frac{1}{2}(\epsilon_1 + \dots + \epsilon_n).
\]
The Weyl vector of $\mathfrak{so}_{2n+1}$ is the half sum of all positive roots, while that of $\mathfrak{osp}_{1|2n}$ is the difference of the half sums of positive even roots and positive odd roots. These two vectors coincide and equal
\[
\rho = \omega_1 + \dots + \omega_n.
\]
Let 
\[
P^+ = \mathbb Z_{\geq 0}\omega_1 + \dots + \mathbb Z_{\geq 0}\omega_{n-1} + \mathbb Z_{\geq 0}2\omega_n.
\]
This is the set of highest weights of finite-dimensional irreducible representations of 
$\mathfrak{osp}_{1|2n}$ and of non-spinorial irreducible representations of $\mathfrak{so}_{2n+1}$. 

Since $\mathfrak{osp}_{1|2n}$ is a superalgebra, the category of finite-dimensional $\mathfrak{osp}_{1|2n}$-modules is a supercategory and we need to distinguish between a module and its parity reversal. Define the parity of a weight by the following additive map
\begin{align*}
p: Q &\longrightarrow \mathbb{Z}/2\mathbb{Z}\\
\sum_{i=1}^n m_i\epsilon_i & \longmapsto \sum_{i=1}^n m_i
\pmod{2}.
\end{align*}
Note that this parity grading of $Q$ is consistent with the division of the roots of $\mathfrak{osp}_{1\vert 2n}$ into even and odd roots.
We say a weight $\mathfrak{osp}_{1|2n}$-module $M$ is even if
\[
M_\mu\subseteq M_{p(\mu)}
\]
for every weight $\mu$, that is, each weight space of $M$ is parity homogeneous with the same parity as its weight. 

Consider the underlying category of the supercategory of finite-dimensional $\mathfrak{osp}_{1|2n}$-modules, which has the same objects as the supercategory but retains only the even morphisms. Its full subcategory consisting of even modules is closed under the tensor product. Indeed, if $M$ and $N$ are even modules, then for homogeneous weight vectors $m\in M_\mu,
\; n\in N_\nu$,
the tensor product $m\otimes n$ has weight
\[
\operatorname{wt}(m\otimes n)=\mu+\nu
\]
and parity
\[
p(m\otimes n) = p(m)+p(n) = p(\mu)+p(\nu) = p(\mu+\nu)
\]
since $p$ is additive.
Therefore,
\[
(M\otimes N)_\gamma
\subseteq
(M\otimes N)_{p(\gamma)}
\]
for every weight $\gamma$, showing that $M\otimes N$ is also an even module.

For a module $M$ with weight space decomposition $M = \bigoplus_\mu M_\mu$, we call
\[
\ch[M] = \sum_\mu \text{dim}(M_\mu) e^\mu \  \in \mathbb Z[Q] 
\]
the {\em graded character}.
\begin{theorem} \textup{(Rittenberg-Scheunert \cite{{Rittenberg:1981fm}})}
For $\Lambda \in P^+$, let $M(\Lambda)$ be the even irreducible highest-weight module of 
    $\mathfrak{osp}_{1|2n}$ of highest-weight $\Lambda$, and let $N(\Lambda)$ be the corresponding one of $\mathfrak{so}_{2n+1}$. Then the graded characters coincide:
    \[
    \ch[M(\Lambda)] = \ch[N(\Lambda)],
    \]
    and the tensor rings coincide:
    \[
    M({\Lambda}) \otimes M({\Lambda'}) \cong \bigoplus_{\Lambda'' \in P^+} N_{\Lambda, \Lambda'}^{\ \ \Lambda''}\  M({\Lambda''}), \quad 
    N({\Lambda}) \otimes N({\Lambda'}) \cong \bigoplus_{\Lambda'' \in P^+} N_{\Lambda, \Lambda'}^{\ \ \Lambda''}  \ N({\Lambda''}), 
    \]
    where the tensor product multiplicities are given by
\begin{align*}
N_{\Lambda, \Lambda'}^{\ \ \Lambda''} & = \dim\Hom(M({\Lambda}) \otimes M({\Lambda'}), M({\Lambda''}))\\
&= \dim\Hom(N({\Lambda}) \otimes N({\Lambda'}), N({\Lambda''})).
\end{align*}
\end{theorem}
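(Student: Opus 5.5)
Two ingredients suffice: a character formula for each side, and the fact that, for semisimple categories whose simples are determined by their characters, equal characters force equal tensor multiplicities.

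\emph{Step 1 (characters).} For $\mathfrak{so}_{2n+1}$ the Weyl character formula gives
\[
\ch[N(\Lambda)] = \frac{\sum_{w\in W} \varepsilon(w) e^{w(\Lambda+\rho)}}{\prod_{\alpha>0}(e^{\alpha/2}-e^{-\alpha/2})}.
\]
For $\mathfrak{osp}_{1|2n}$, Kac's character formula for typical modules applies; all finite-dimensional irreducibles of $\mathfrak{osp}_{1|2n}$ are typical. The \emph{super}character of $M(\Lambda)$ is
\[
\frac{\sum_{w}\varepsilon(w)e^{w(\Lambda+\rho)}}{\prod_{\alpha\in\Delta^+_{\bar 0}}(e^{\alpha/2}-e^{-\alpha/2})}\prod_{\beta\in\Delta^+_{\bar1}}(e^{\beta/2}-e^{-\beta/2}).
\]
Here $W$ is the Weyl group of $\mathfrak{sp}_{2n}$, which coincides with that of $\mathfrak{so}_{2n+1}$, and $\rho$ is the common Weyl vector computed above. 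Since $M(\Lambda)$ is even, its supercharacter equals $\ch[M(\Lambda)]$ after the substitution $e^{\mu}\mapsto (-1)^{p(\mu)}e^\mu$, i.e.\ $e^{\epsilon_i}\mapsto -e^{\epsilon_i}$.

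The denominator comparison is then a routine algebraic identity. The even positive roots of $\mathfrak{osp}_{1|2n}$ are $\epsilon_i\pm\epsilon_j$ ($i<j$) and $2\epsilon_i$. Using
\[
(e^{\epsilon_i}-e^{-\epsilon_i}) = (e^{\epsilon_i/2}-e^{-\epsilon_i/2})(e^{\epsilon_i/2}+e^{-\epsilon_i/2}),
\]
the odd factors turn the $2\epsilon_i$ denominators into $(e^{\epsilon_i/2}+e^{-\epsilon_i/2})$. Under the sign twist $e^{\epsilon_i/2}\mapsto i\,e^{\epsilon_i/2}$, these become $i(e^{\epsilon_i/2}-e^{-\epsilon_i/2})$. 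The twist also multiplies the numerator term $e^{w(\Lambda+\rho)}$ by a constant phase. This phase is $W$-invariant up to sign, because $p(w\mu)\equiv p(\mu)$ for integral $\mu$; for half-integral $\rho$ it is tracked by the factor $i^{\sum m_i}$. All phases cancel between numerator and denominator. Hence $\ch[M(\Lambda)]=\ch[N(\Lambda)]$.

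The sign and phase bookkeeping in this twist, especially with $\rho$ having half-integer coordinates, is the only delicate point, and it is where I expect the main obstacle. As a fallback, I would check the identity on the Weyl denominators directly. The claim to verify is that the $\mathfrak{osp}$ denominator $\prod_{\bar0}/\prod_{\bar1}$, after the twist, equals the $\mathfrak{so}_{2n+1}$ denominator up to a global unit. This follows from $\Delta^+_{\bar 0}\setminus\{2\epsilon_i\}$ being exactly the long roots of $\mathfrak{so}_{2n+1}$, while the pair $\{2\epsilon_i \text{ (even)},\ \epsilon_i\text{ (odd)}\}$ plays the role of the short root $\epsilon_i$.

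\emph{Step 2 (tensor rings).} Both categories, finite-dimensional $\mathfrak{osp}_{1|2n}$-modules (even ones) and non-spinorial $\mathfrak{so}_{2n+1}$-modules, are semisimple. Complete reducibility holds for $\mathfrak{osp}_{1|2n}$ by Djokovi\'c--Hochschild. Even modules are closed under $\otimes$, as shown above, so every summand of $M(\Lambda)\otimes M(\Lambda')$ is an even $M(\Lambda'')$. Characters are multiplicative under tensor product, and the characters $\{\ch[N(\Lambda)]\}_{\Lambda\in P^+}$ are linearly independent: each has the form $e^\Lambda+{}$lower terms. Comparing
\[
\ch[M(\Lambda)]\,\ch[M(\Lambda')] = \ch[N(\Lambda)]\,\ch[N(\Lambda')]
\]
therefore yields identical decompositions, with the same multiplicities $N_{\Lambda,\Lambda'}^{\ \ \Lambda''}$. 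Finally, Schur's lemma identifies these multiplicities with the stated $\Hom$ dimensions; evenness makes even $\Hom$ spaces sufficient.
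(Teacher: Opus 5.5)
The paper does not prove this statement; it quotes it from Rittenberg--Scheunert. So the question is whether your argument stands on its own. Your Step 2 does: complete reducibility, summands of an even module being even, multiplicativity and linear independence of characters, and the vanishing of odd morphisms between even modules give exactly the claimed multiplicities.

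The gap is in Step 1, at the point you flagged. The supercharacter formula you wrote is not Kac's. In Kac's supercharacter formula the alternating sum carries the sign $\varepsilon'(w)=(-1)^{\ell'(w)}$, where $\ell'$ counts only reflections in even roots $\alpha$ with $\alpha/2\notin\Delta_{\bar 1}$. For $\mathfrak{osp}_{1|2n}$ the reflections $s_{2\epsilon_i}$ therefore do not count, and $\varepsilon'(w)=\varepsilon(w)(-1)^{s(w)}$, where $s(w)$ is the number of sign changes in the signed permutation $w$. Already for $n=1$, $\Lambda=0$, your formula gives $(e^{\epsilon_1/2}-e^{-\epsilon_1/2})/(e^{\epsilon_1/2}+e^{-\epsilon_1/2})$ instead of $1$; this is not even a Laurent polynomial.

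The phase bookkeeping also does not work as you describe. Since $\Lambda+\rho$ has half-odd-integer coordinates, the twist sends $e^{w(\Lambda+\rho)}$ to $c\,(-1)^{s(w)}e^{w(\Lambda+\rho)}$ with $c=i^{2\sum_j(\Lambda+\rho)_j}$. The $w$-dependent sign $(-1)^{s(w)}$ cannot cancel against the $w$-independent denominator; it changes the sign character of the alternating sum. Applied to your formula, it turns $\varepsilon$ into $\varepsilon'$ and yields a wrong answer. Your fallback, which checks only the denominators, cannot detect this. Applied to the correct formula, it turns $\varepsilon'$ into $\varepsilon$, and the argument goes through.

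A cleaner repair is to skip supercharacters entirely. $\ch$ in the paper is the ordinary character. Kac's formula for the ordinary character of a typical module has $\prod_{\beta\in\Delta^+_{\bar 1}}(e^{\beta/2}+e^{-\beta/2})$ in the numerator and uses the ordinary sign $\varepsilon(w)$. Your factorization $e^{\epsilon_i}-e^{-\epsilon_i}=(e^{\epsilon_i/2}-e^{-\epsilon_i/2})(e^{\epsilon_i/2}+e^{-\epsilon_i/2})$ then cancels the odd factors against the roots $2\epsilon_i$. What remains is literally the Weyl character formula of $\mathfrak{so}_{2n+1}$, with the same $W$, the same $\varepsilon$ and the same $\rho$, and no twist or phases to track. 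With Step 1 replaced in this way, your two-step argument is a complete proof of the cited theorem.
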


This theorem implies that the category of finite-dimensional non-spinorial representations of $\mathfrak{so}_{2n+1}$ is equivalent as a tensor category to the full subcategory of finite-dimensional even representations of $\mathfrak{osp}_{1|2n}$ via the identification $N(\Lambda) \mapsto M(\Lambda)$. To see why, note that the forgetful functors from both categories to the category of finite-dimensional vector spaces induce tensor equivalences with their images, and the images of these two forgetful functors coincide.


\section{The affine Lie superalgebras $\widehat{\mathfrak{osp}}_{1|2n}$ and $\widehat{\mathfrak{so}}_{2n+1}$ and their representations}\label{aff}

We now consider representations of the affinizations of $\mathfrak{osp}_{1\vert 2n}$ and $\mathfrak{so}_{2n+1}$ at irrational levels, $k \in \mathbb C\setminus \mathbb Q$. We denote by $M_k(\Lambda)$ and $N_k(\Lambda)$ the Weyl $\widehat{\mathfrak{osp}}_{1|2n}$- and $\widehat{\mathfrak{so}}_{2n+1}$-modules induced from $M(\Lambda)$ and $N(\Lambda)$, respectively. 
The conformal weights of their top levels, coming from the usual Sugawara constructions, are
\[
h(M_k(\Lambda)) = \frac{(\Lambda| \Lambda+2\rho)_\mathfrak{osp}}{2(k+n+\frac{1}{2})}, 
\qquad h(N_k(\Lambda)) = \frac{(\Lambda| \Lambda+2\rho)_\mathfrak{so}}{2(k+2n-1)}.
\]
The bilinear forms are normalized so that long roots have square norm two, so
\[
(\Lambda| \Lambda+2\rho)_\mathfrak{osp} = \frac{1}{2}(\Lambda| \Lambda+2\rho)_\mathfrak{so}.
\]
Thus 
\[
h(M_k(\Lambda)) = \frac{(\Lambda| \Lambda+2\rho)_\mathfrak{so}}{2(2k+2n+1)}.
\]
We define the parity of a weight of $\widehat{\mathfrak{osp}}_{1|2n}$ to be the parity of its restriction to a weight of $\mathfrak{osp}_{1|2n}$, and we say a module for the universal  affine vertex operator superalgebra $V_k(\mathfrak{osp}_{1|2n})$ is even if the parity of any of its weight vectors agrees with the parity of the corresponding weight. 

Define the Kazhdan-Lusztig category $\KL_k(\mathfrak{osp}_{1\vert 2n})$ to be the category whose objects are finitely-generated grading-restricted generalized $V_k(\mathfrak{osp}_{1\vert 2n})$-modules (in particular, every module has finite-dimensional conformal weight spaces and a lower bound on conformal weights) and whose morphisms are even $V_k(\mathfrak{osp}_{1\vert 2n})$-module homomorphisms (equivalently, even $\widehat{\mathfrak{osp}}_{1\vert 2n}$-module homomorphisms). For $k\in\mathbb{C}\setminus\mathbb{Q}$, it is shown in \cite[Theorems 5.10 and 5.11]{AALY} that $\KL_k(\mathfrak{osp}_{1\vert 2n})$ is a semisimple braided tensor category, with simple objects $M_k(\Lambda)$ for $\Lambda\in P^+$ and their parity reversals.
Let $\KL_k^{\rm ev}(\mathfrak{osp}_{1|2n})$ be the full subcategory of $\KL_k(\mathfrak{osp}_{1\vert 2n})$ consisting of even $V_k(\mathfrak{osp}_{1|2n})$-modules; it is semisimple with simple objects $M_k(\Lambda)$ for $\Lambda\in P^+$.

\begin{theorem}\label{thm:ss-BTC}
    $\KL_k^{\rm ev}(\mathfrak{osp}_{1|2n})$ is a semisimple braided ribbon tensor category.
\end{theorem}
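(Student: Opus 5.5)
We start from \cite[Theorems 5.10 and 5.11]{AALY}: for $k\in\mathbb{C}\setminus\mathbb{Q}$, $\KL_k(\mathfrak{osp}_{1\vert 2n})$ is a semisimple braided tensor category, built by vertex tensor category theory, with simple objects $M_k(\Lambda)$ and their parity reversals. There are three steps. First, $\KL_k^{\rm ev}(\mathfrak{osp}_{1|2n})$ is a braided tensor subcategory. Second, it has a natural ribbon twist. Third, it is rigid, which is the real content.

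For closure under fusion, the unit $V_k(\mathfrak{osp}_{1|2n})=M_k(0)$ is even. Semisimplicity reduces the question to $M_k(\Lambda)\boxtimes M_k(\Lambda')$. By the universal property of the fusion product, a nonzero summand $\Pi^{\epsilon}M_k(\Lambda'')$ gives a nonzero even intertwining operator of type $\binom{\Pi^\epsilon M_k(\Lambda'')}{M_k(\Lambda)\,M_k(\Lambda')}$. Restricting to top levels gives a nonzero even $\mathfrak{osp}_{1|2n}$-homomorphism $M(\Lambda)\otimes M(\Lambda')\to \Pi^\epsilon M(\Lambda'')$; this is the standard Frenkel--Zhu / Zhu-algebra argument. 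The source is even by the computation in Section~\ref{finite}, and even morphisms preserve weights and parity. So $\Pi^\epsilon M(\Lambda'')$ has an even weight space, hence $\epsilon=0$. Thus the fusion product is even, and associativity, unit and braiding isomorphisms restrict to the full subcategory.

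For the twist, take $\theta=e^{2\pi i L_0}$. On $M_k(\Lambda)$ it acts by the scalar $e^{2\pi i h(M_k(\Lambda))}$, since all conformal weights differ from $h$ by integers. It is natural, and the balancing identity $\theta_{X\boxtimes Y}=\mathcal{R}^2(\theta_X\boxtimes\theta_Y)$ is the standard consequence of the $L_{-1}$-derivative and skew-symmetry properties of intertwining operators. One point to check is whether the super setting introduces sign corrections. On even modules the parity automorphism of $M_k(\Lambda)$ acts on each weight space by $(-1)^{p(\mu)}$, which is compatible with the braiding. So I expect $\theta$ to work unmodified, or at worst to need twisting by the parity involution.

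Rigidity is the main obstacle, and the approach follows the one that works for semisimple affine categories. Each simple $M_k(\Lambda)$ has a candidate dual $M_k(\Lambda^*)$ with $\Lambda^*=\Lambda$; here $-w_0=\mathrm{id}$, so all modules are self-dual via the contragredient. Evaluation comes from the nonzero even pairing of $M(\Lambda)\otimes M(\Lambda)\to M(0)$ extended to an intertwining operator. Coevaluation exists by Frobenius reciprocity $\Hom(\mathbb{1},M\boxtimes M')\cong\Hom(M\boxtimes M,\mathbb{1})$, which holds in any braided tensor category with $M'=$ contragredient. It remains to show that the zig-zag composite $M\to M\boxtimes M'\boxtimes M\to M$ is nonzero, hence a nonzero scalar that can be normalized to $1$.

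I would first try the approach of \cite{CKM2}/Huang's rigidity criterion: express the zig-zag composite through the associativity of intertwining operators. The composite then becomes the coefficient of the $M_k(0)$-channel in a four-point function. For irrational $k$ this four-point function solves a KZ equation, and the coefficient is a ratio of Gamma-function values of the form $\Gamma(\cdots/(2k+2n+1))$. These values are nonzero and finite because $k\notin\mathbb{Q}$. To avoid a general KZ computation, I would first establish rigidity for a generating object, say $M_k(\omega_1)$ (the $(1|2n)$-dimensional vector representation). Every simple even module then appears as a summand of some tensor power of it, because the fusion rules agree with non-spinorial $\mathfrak{so}_{2n+1}$ (Rittenberg--Scheunert), and $N(\omega_1)$ generates the non-spin category. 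Rigidity of all objects then follows, since in a semisimple category, summands of tensor products of rigid objects are rigid.
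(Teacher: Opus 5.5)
\paragraph{Closure and twist.} These two steps are essentially the paper's argument. You restrict a nonzero even intertwining operator into $\Pi^\epsilon M_k(\Lambda'')$ to top levels and use parity of weights to force $\epsilon=0$, and you take the twist $e^{2\pi i L_0}$. The paper invokes the full Frenkel--Zhu--Li theorem \cite{FZ, Li}, which gives the exact multiplicities $N_{\Lambda,\Lambda'}^{\ \ \Lambda''}$. You also need those multiplicities later, both for $N_{\Lambda,\Lambda}^{\ \ 0}=1$ and for generation by $M_k(\omega_1)$. Rittenberg--Scheunert alone is a statement about finite-dimensional $\mathfrak{osp}_{1|2n}$-modules, not about $\boxtimes$. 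No parity correction of the twist is needed: $V_k(\mathfrak{osp}_{1|2n})$ is $\mathbb{Z}$-graded, and the signs $(-1)^{p(w_1)p(w_2)}$ cancel in the double braiding.

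\paragraph{The gap: rigidity.} Your decisive claim is asserted, not derived. You claim the vacuum-to-vacuum connection coefficient for the four-point function of $M_k(\omega_1)$ is a ratio of Gamma values in $1/(2k+2n+1)$, and hence finite and nonzero for every $k\notin\mathbb{Q}$. This computation is not routine.
\begin{itemize}
\item The space of conformal blocks is $3$-dimensional, with channels $2\omega_1$, $\omega_2$, $0$.
\item So the KZ equation becomes a rank-$3$ Fuchsian system on $\mathbb{P}^1\setminus\{0,1,\infty\}$ whose residues at all three points have three distinct eigenvalues.
\item This is not of Gauss or ${}_3F_2$ hypergeometric type. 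Its Katz rigidity index is $0$, so the local exponents do not determine the connection data.
\item The explicit form is exactly what you need. Knowing only that the coefficient is analytic in $k$ and generically nonzero would not exclude zeros at particular irrational $k$.
\end{itemize}
To close this you would have to solve the connection problem, for example via integral representations or a Drinfeld--Kohno-type monodromy theorem for $\mathfrak{osp}_{1|2n}$. The proposal does neither.

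Two smaller issues:
\begin{itemize}
\item ``Frobenius reciprocity in any braided tensor category with $M'$ the contragredient'' is false without rigidity. Here a nonzero candidate coevaluation exists instead because $N_{\Lambda,\Lambda}^{\ \ 0}=1$ and the category is semisimple.
\item You must check both zig-zag identities, or cite the standard lemma that one nonzero composite suffices for a simple object.
\end{itemize}

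\paragraph{The paper's route.} The paper avoids all four-point analysis. Once the fusion rules are known to be those of non-spinorial $\mathfrak{so}_{2n+1}$, every object of $\KL_k^{\rm ev}(\mathfrak{osp}_{1|2n})$ is non-negligible and of moderate growth in the sense of \cite{EP}. Moderate growth holds because the number of simple summands of $X^{\boxtimes m}$ grows at most exponentially in $m$. Rigidity then follows directly from \cite[Theorem 1.1]{EP}. Your reduction to the generator $M_k(\omega_1)$ is correct, as is the fact that summands of tensor products of rigid objects are rigid, but with the Etingof--Penneys theorem they are unnecessary.
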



\begin{proof}
Since $\KL^{\mathrm{ev}}_k(\mathfrak{osp}_{1\vert 2n})$ is a full subcategory of the semisimple braided tensor category $\KL_k(\mathfrak{osp}_{1\vert 2n})$, it is semisimple. Since $\KL^{\mathrm{ev}}_k(\mathfrak{osp}_{1\vert 2n})$ also contains the tensor unit $M_k(0)=V_k(\mathfrak{osp}_{1\vert 2n})$, it will be a braided tensor category if it is closed under the tensor product $\boxtimes$ on $\KL_k(\mathfrak{osp}_{1\vert 2n})$. To show this,
because $\KL_k(\mathfrak{osp}_{1\vert 2n})$ is semisimple, it is enough to determine the multiplicity of every simple object $W$ of $\KL_k(\mathfrak{osp}_{1\vert 2n})$ in $M_k(\Lambda)\boxtimes M_k(\Lambda')$ for $\Lambda,\Lambda'\in P^+$. By the Frenkel-Zhu-Li fusion rules theorem \cite{FZ, Li} (see also \cite[Theorem 3.1]{CMY3}), this multiplicity is the same as the dimension of the space of even $\mathfrak{osp}_{1\vert 2n}$-homomorphisms from $M(\Lambda)\otimes M(\Lambda')$ to the top level of $W$. It follows that
\[
M_k({\Lambda}) \boxtimes M_k({\Lambda'}) \cong \bigoplus_{\Lambda'' \in P^+} N_{\Lambda, \Lambda'}^{\ \ \Lambda''} \ M_k({\Lambda''}), 
\]
which shows $\KL^{\mathrm{ev}}_k(\mathfrak{osp}_{1\vert 2n})$ is closed under $\boxtimes$ and so is a braided tensor category. 

The above fusion rules also imply that every object of $\KL^{\mathrm{ev}}_k(\mathfrak{osp}_{1\vert 2n})$ is non-negligible and has moderate growth in the sense of \cite[Section 1]{EP}, and thus $\KL^{\mathrm{ev}}_k(\mathfrak{osp}_{1\vert 2n})$ is rigid by \cite[Theorem 1.1]{EP}. Finally, since $V_k(\mathfrak{osp}_{1\vert 2n})$ is a $\mathbb{Z}$-graded vertex operator superalgebra, $\KL^{\mathrm{ev}}_k(\mathfrak{osp}_{1\vert 2n})$ also has ribbon twist $e^{2\pi i L_0}$ as usual and thus is a ribbon tensor category.
\end{proof}

Similarly, for $k\in\mathbb{C}\setminus\mathbb{Q}$, let $\KL_k(\mathfrak{so}_{2n+1})$ be the category of finitely-generated grading-restricted generalized modules for the universal affine VOA $V_k(\mathfrak{so}_{2n+1})$. It is a semisimple braided ribbon tensor category by the non-simply-laced generalization of \cite{KL1,KL2,KL3,KL4} (existence of braided tensor structure also follows from \cite{Zhang}, and rigidity from \cite{EP}). Then let $\KL_k^{\rm ns}(\mathfrak{so}_{2n+1})$ be the full subcategory of $\KL_k(\mathfrak{so}_{2n+1})$ of $V_k(\mathfrak{so}_{2n+1})$-modules whose top levels are non-spinorial $\mathfrak{so}_{2n+1}$-modules; it is a semisimple braided ribbon tensor category with simple objects $N_k(\Lambda)$, $\Lambda\in P^+$, just as in the proof of Theorem \ref{thm:ss-BTC}.

\section{Braidings on categories of non-spinorial $\mathfrak{so}_{2n+1}$ type}\label{qg}

We say a semisimple braided tensor category $\mathcal{C}$ is of \textit{non-spinorial $\mathfrak{so}_{2n+1}$ type} if its Grothendieck ring is the same as that of the category of non-spinorial finite-dimensional $\mathfrak{so}_{2n+1}$-modules. Thus the simple objects $X(\Lambda)$ of such a category $\mathcal{C}$ are labeled by $\Lambda \in P^+$ and have fusion rules
\[
X({\Lambda}) \otimes X({\Lambda'}) \cong \bigoplus_{\Lambda'' \in P^+} N_{\Lambda, \Lambda'}^{\ \ \Lambda''}\  X({\Lambda''}). 
\]
So far we have seen that $\KL_k^{\rm ns}(\mathfrak{so}_{2n+1})$ and $\KL_k^{\rm ev}(\mathfrak{ops}_{1\vert 2n})$ for $k\in\mathbb{C}\setminus\mathbb{Q}$ are of non-spinorial $\mathfrak{so}_{2n+1}$ type, and in this section, we will introduce further examples from the quantum (super)groups associated to $\mathfrak{so}_{2n+1}$ and $\mathfrak{osp}_{1\vert 2n}$. 

As mentioned in \cite[Section 7.8]{Tuba-Wenzl}, categories of non-spinorial $\mathfrak{so}_{2n+1}$ type admit more than one braiding, so in this section, we will also determine a choice of braiding on each of our examples.
Any category $\mathcal{C}$ of non-spinorial $\mathfrak{so}_{2n+1}$ type is tensor generated by its simple object $X(\omega_1)$, so any braiding on $\mathcal{C}$ is determined by the self-braiding automorphism $c_{\omega_1,\omega_1}$ of 
\begin{equation*}
X(\omega_1)\otimes X(\omega_1)\cong X(2\omega_1)\oplus X(\omega_2)\oplus X(0),
\end{equation*}
which is in turn determined by its three eigenvalues $a_{2\omega_1}$, $a_{\omega_1}$, and $a_0$ on $X(2\omega_1)$, $X(\omega_2)$, and $X(0)$, respectively. Thus it is sufficient to determine a choice of these eigenvalues for each category of interest.

\subsection{Quantum (super)groups}\label{subsec:qg-braidings}

Let $\mathcal{C}_\zeta(\mathfrak{so}_{2n+1})$ be the semisimple braided tensor category of finite-dimensional weight modules for the quantum group $U_\zeta(\mathfrak{so}_{2n+1})$ for $\zeta$ not a root of unity (see for example \cite[Section 1.3]{BK} or \cite[Section 5.7]{EGNO}). We use $\zeta$ for the quantum parameter instead of $q$ to avoid confusion later with the parameter denoted $q$ in \cite{Tuba-Wenzl} (which is $\zeta^2$). The category $\mathcal{C}_\zeta(\mathfrak{so}_{2n+1})$ has the same Grothendieck ring as the category of finite-dimensional $\mathfrak{so}_{2n+1}$-modules, so its full subcategory $\mathcal{C}_\zeta^{\rm ns}(\mathfrak{so}_{2n+1})$ of non-spinorial representations is of non-spinorial $\mathfrak{so}_{2n+1}$ type. We give $\mathcal{C}_\zeta^{\rm ns}(\mathfrak{so}_{2n+1})$ the braiding coming from the standard universal $R$-matrix indicated in \cite[Equation 1.3.12]{BK}.

To determine the eigenvalues $a_{2\omega_1}$, $a_{\omega_2}$, and $a_0$ of the self-braiding $c_{\omega_1,\omega_1}$, we need the ribbon twists and categorical dimensions of some simple objects in $\mathcal{C}^{\rm ns}_\zeta(\mathfrak{so}_{2n+1})$.
The ribbon twist $\theta\in{\rm Aut}(\id_{\mathcal{C}^{\rm ns}_\zeta(\mathfrak{so}_{2n+1})})$ is given by the scalar
\begin{equation*}
    \theta_{\Lambda} = \zeta^{2(\Lambda \vert \Lambda + 2\rho)_{\mathfrak{so}}},
\end{equation*}
on the irreducible $U_\zeta(\mathfrak{so}_{2n+1})$-module of highest weight $\Lambda \in P^+$ 
(see \cite[Exercise~2.2.6]{BK}, where the bilinear form is scaled so that short roots have square length $2$, giving $2(\cdot\vert\cdot)_{\mathfrak{so}}$). In particular, we can calculate
\begin{align*}
\theta_{\omega_1} = \zeta^{4n},\qquad\theta_{2\omega_1} = \zeta^{8n+4},\qquad\theta_{\omega_2} = \zeta^{8n-4},
\end{align*}
and obviously $\theta_0=1$.
The categorical dimension of the irreducible $U_\zeta(\mathfrak{so}_{2n+1})$-module of highest weight $\Lambda\in P^+$ is given by
\begin{equation*}
    d_{\Lambda} = \prod_{\alpha \in \Delta(\mathfrak{so}_{2n+1})_+}\frac{\zeta^{2(\alpha\vert \Lambda+\rho)_{\mathfrak{so}}}-\zeta^{-2(\alpha\vert\Lambda+\rho)_{\mathfrak{so}}}}{\zeta^{2(\alpha\vert \rho)_{\mathfrak{so}}}-\zeta^{-2(\alpha\vert\rho)_{\mathfrak{so}}}}
\end{equation*}
(see \cite[Equation~(3.3.5)]{BK}, where again $2(\cdot\vert \cdot)_{\mathfrak{so}}$ is used). In particular,
\begin{align*}
   & d_{\omega_1} = \frac{(\zeta^{2n+1} -\zeta^{-2n-1)})(\zeta^{2n-1}+\zeta^{-2n+1)})}{\zeta^2-\zeta^{-2}}, \nonumber \\
   & d_{2\omega_1} = \frac{(\zeta^{2n+3} - \zeta^{-2n-3)})(\zeta^{2n-1}+\zeta^{-2n+1)})(\zeta^{4n}-\zeta^{-4n})}{(\zeta^4-\zeta^{-4})(\zeta^2-\zeta^{-2})},\nonumber \\
   & d_{\omega_2} = \frac{(\zeta^{2n+1} - \zeta^{-2n-1})(\zeta^{2n-3}+\zeta^{-2n+3)})(\zeta^{4n}-\zeta^{-4n})}{(\zeta^4-\zeta^{-4})(\zeta^2-\zeta^{-2})},
\end{align*}
and again obviously $d_0=1$.

Now the balancing equation for the braiding and twist says that
\begin{equation*}
c_{Y,X}\circ c_{X,Y}=\theta_{X\otimes Y}\circ(\theta_X^{-1}\otimes\theta_Y^{-1})
\end{equation*}
for any objects $X$, $Y$ in $\mathcal{C}_\zeta^{\rm ns}(\mathfrak{so}_{2n+1})$. Thus taking $X$ and $Y$ to be the irreducible $U_\zeta(\mathfrak{so}_{2n+1})$-module of highest weight $\omega_1$, we find that the eigenvalues of $c_{\omega_1,\omega_1}$ satisfy $a_\Lambda^2=\theta_\Lambda\theta_{\omega_1}^{-2}$ for $\Lambda =2\omega_1,\omega_2,0$, that is,
\begin{equation}\label{balancing}
a_{2\omega_1}^2 = \zeta^4, \qquad a_{\omega_2}^2 = \zeta^{-4}, \qquad a_0^2 = \zeta^{-8n}.
\end{equation}
Moreover, from \cite[Exercise~8.10.15]{EGNO}, the categorical dimension of the irreducible $U_\zeta(\mathfrak{so}_{2n+1})$-module of highest weight $\omega_1$ satisfies
\begin{equation*}
d_{\omega_1} = \theta_{\omega_1}\cdot{\rm Tr}(c_{\omega_1,\omega_1}^{-1}),
\end{equation*}
which yields the equation
\begin{equation}\label{twistbraiding}
\theta_{\omega_1}^{-1}d_{\omega_1} = a_{2\omega_1}^{-1}d_{2\omega_1} + a_{\omega_2}^{-1}d_{2\omega_2} + a_0^{-1}.
\end{equation}
Now it is straightforward to solve \eqref{balancing} and \eqref{twistbraiding} to obtain,
\begin{equation}\label{quantum-so-braiding}
a_{2\omega_1} = \zeta^2, \qquad a_{\omega_2} = -\zeta^{-2},\qquad a_0= \zeta^{-4n}.
\end{equation}
This determines the braiding on $\mathcal{C}^{\rm ns}_\zeta(\mathfrak{so}_{2n+1})$.


Now let $U_q(\mathfrak{osp}_{1|2n})$ be the quantum supergroup of $\mathfrak{osp}_{1|2n}$ for $q$ not a root of unity (see the definitions in \cite[Section~2]{Zhang:1992b} or \cite[Section 3]{B}). For both $U_q(\mathfrak{osp}_{1\vert 2n})$ and $U_\zeta(\mathfrak{so}_{2n+1})$, we can restrict to considering only finite-dimensional weight modules on which the Cartan generators $h_i$ have integer eigenvalues. If we do so, then we get certain quotient algebras of these quantum (super)groups, and by \cite[Theorem~3.3 and Lemma~3.4]{Zhang:1992b} (see also \cite[Theorem~1.1]{Xu-Zhang}), this quotient of $U_q(\mathfrak{osp}_{1\vert 2n})$ is isomorphic to the corresponding quotient of $U_\zeta(\mathfrak{so}_{2n+1})$ at a suitable $\zeta$ (comparing the quantum $\mathfrak{so}_{2n+1}$ commutation relations in \cite{Zhang:1992b} with those in \cite{BK, EGNO}, we see that $\zeta$ is a square root of $-q$ if we define $U_\zeta(\mathfrak{so}_{2n+1})$ using the generators and relations of \cite{BK,EGNO}). Consequently, by \cite[Theorem~2.1 and Theorem~3.5]{Zhang:1992b}, the supercategories of finite-dimensional weight $U_q(\mathfrak{osp}_{1\vert 2n})$-modules and of finite-dimensional $\mathfrak{osp}_{1|2n}$-modules are equivalent as supercategories.


Moreover, we can define even weight $U_q(\mathfrak{osp}_{1\vert 2n})$-modules just as for $\mathfrak{osp}_{1\vert 2n}$. Let $\mathcal{C}_{q}^{\rm ev}({\mathfrak{osp}_{1|2n}})$ be the category of even finite-dimensional weight $U_q(\mathfrak{osp}_{1\vert 2n})$-modules and even $U_q(\mathfrak{osp}_{1\vert 2n})$-homomorphisms. Then $\mathcal{C}_q^{\rm ev}(\mathfrak{osp}_{1\vert 2n})$ is a semisimple braided tensor category which by \cite[Theorem~3.3 and Lemma~3.4]{Zhang:1992b} again is tensor equivalent to $\mathcal{C}_\zeta^{\rm ns}(\mathfrak{so}_{2n+1})$. In particular, $\mathcal{C}_q^{\rm ev}(\mathfrak{osp}_{1\vert 2n})$ is of non-spinorial type. The braiding on $\mathcal{C}_q^{\rm ev}(\mathfrak{osp}_{1\vert 2n})$ depends on the choice of universal $R$-matrix. One choice was used in \cite[Section~III.C.1]{Zhang:1992a} to calculate the eigenvalues $(a_{2\omega_1},a_{\omega_2},a_0)$ of the self-braiding of the even irreducible $U_q(\mathfrak{osp}_{1\vert 2n})$-module of highest weight $\omega_1$, yielding $(-q^{-1},q,q^{2n})$, while a different choice was used in \cite[Lemma 7.2]{B}, yielding $(-q,q^{-1},q^{-2n})$. We will choose the second braiding, from \cite[Lemma 7.2]{B}, as our braiding on $\mathcal{C}^{\rm ev}_q(\mathfrak{osp}_{1\vert 2n})$.

\subsection{$\KL_k(\mathfrak{so}_{2n+1})$ and $\KL_k(\mathfrak{osp}_{1|2n})$}

Now we determine the braidings on $\KL_k^{\rm ns}(\mathfrak{so}_{2n+1})$ and $\KL_k^{\rm ev}(\mathfrak{osp}_{1|2n})$ for $k \in \mathbb C\setminus \mathbb Q$.
So that the following arguments apply to both categories, we use $L_k(\Lambda)$ for $\Lambda\in P^+$ to stand for $N_k(\Lambda)$ in $\KL_k^{\rm ns}(\mathfrak{so}_{2n+1})$ or $M_k(\Lambda)$ in $\KL_k^{\rm ev}(\mathfrak{osp}_{1\vert 2n})$. Let $f$ denote an isomorphism 
\[
L_k(\omega_1)\boxtimes L_k(\omega_1)\longrightarrow L_k(2\omega_1)\oplus L_k(\omega_2)\oplus L_k(0),
\]
and for $\Lambda=2\omega_1,\omega_2,0$, let ${\rm Pr}_\Lambda: L_k(\omega_1)\boxtimes L_k(\omega_1)\rightarrow L_k(\Lambda)$ denote the composition of $f$ with projection to the direct summand $L_k(\Lambda)$. Thus the eigenvalues of the self-braiding automorphism $c_{\omega_1,\omega_1}$ of $L_k(\omega_1)\boxtimes L_k(\omega_1)$ are defined by
\begin{equation}\label{eqn:braiding1}
{\rm Pr}_\Lambda\circ c_{\omega_1,\omega_1}=a_\Lambda\cdot {\rm Pr}_\Lambda
\end{equation}
for $\Lambda=2\omega_1,\omega_2,0$. We proceed to determine each $a_\Lambda$.

For a module $X$ in $\KL_k^{\rm ns}(\mathfrak{so}_{2n+1})$ or $\KL_k^{\rm ev}(\mathfrak{osp}_{1|2n})$, recall the skew-symmetry operation $\Omega$ on the space of (even) intertwining operators of type $\binom{X}{L_k(\omega_1)\,L_k(\omega_1)}$ from \cite[Equation 3.77]{HLZ2}:
\begin{equation*}
    \Omega(\mathcal{Y})(v_1,x)v_2 = (-1)^{p(v_1)p(v_2)}e^{xL_{-1}}\mathcal{Y}(v_2, e^{\pi i}x)v_1,
\end{equation*}
for $v_1,v_2\in L_k(\omega_1)$, where $p$ is the parity (which is always $0$ in the $\KL^{\rm ns}_k(\mathfrak{so}_{2n+1})$ case). Then the self-braiding $c_{\omega_1,\omega_1}$ is characterized by 
\begin{equation}\label{braiding2}
c_{\omega_1,\omega_1}\circ\mathcal{Y}_\boxtimes =\Omega(\mathcal{Y}_\boxtimes),
\end{equation}
where  $\mathcal{Y}_{\boxtimes}$ is the tensor product intertwining operator of type $\binom{L_k(\omega_1) \boxtimes L_k(\omega_1)}{L_k(\omega_1) \; L_k(\omega_1)}$ (see for example \cite[Equation 3.13]{CKM-exts}). Define $\mathcal{E}_\Lambda ={\rm Pr}_\Lambda\circ\mathcal{Y}_\boxtimes$ for $\Lambda=2\omega_1,\omega_2,0$.
\begin{lemma}\label{lemma:braiding}
We have $\Omega(\mathcal{E}_\Lambda)=a_\Lambda\cdot\mathcal{E}_\Lambda$ for $\Lambda=2\omega_1,\omega_2,0$.
\end{lemma}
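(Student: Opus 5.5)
The argument should be short: $\Omega$ is natural with respect to post-composition by module homomorphisms, and the braiding is characterized by \eqref{braiding2}. Concretely, I would first check that for any intertwining operator $\mathcal{Y}$ of type $\binom{X}{L_k(\omega_1)\,L_k(\omega_1)}$ and any even module homomorphism $g: X\to X'$ one has
\[
\Omega(g\circ\mathcal{Y}) = g\circ\Omega(\mathcal{Y}).
\]
This follows directly from the defining formula $\Omega(\mathcal{Y})(v_1,x)v_2=(-1)^{p(v_1)p(v_2)}e^{xL_{-1}}\mathcal{Y}(v_2,e^{\pi i}x)v_1$. Since $g$ is even, it introduces no extra sign. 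It also commutes with $L_{-1}$, hence with $e^{xL_{-1}}$, because it is a $V_k$-module homomorphism. Finally, $g$ acts coefficientwise on the formal series, so it commutes with the substitution $x\mapsto e^{\pi i}x$.

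Next, I would apply this with $g={\rm Pr}_\Lambda$ and $\mathcal{Y}=\mathcal{Y}_\boxtimes$. Combining it with \eqref{braiding2} and \eqref{eqn:braiding1} gives
\[
\Omega(\mathcal{E}_\Lambda)=\Omega({\rm Pr}_\Lambda\circ\mathcal{Y}_\boxtimes)={\rm Pr}_\Lambda\circ\Omega(\mathcal{Y}_\boxtimes)={\rm Pr}_\Lambda\circ c_{\omega_1,\omega_1}\circ\mathcal{Y}_\boxtimes=a_\Lambda\,{\rm Pr}_\Lambda\circ\mathcal{Y}_\boxtimes=a_\Lambda\,\mathcal{E}_\Lambda,
\]
which is the claim.

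Before this, I need the eigenvalue definition \eqref{eqn:braiding1} to make sense, since it is used in the last step. Since $L_k(\omega_1)\boxtimes L_k(\omega_1)$ is multiplicity free, each ${\rm Pr}_\Lambda\circ c_{\omega_1,\omega_1}$ factors as a scalar times ${\rm Pr}_\Lambda$ by Schur's lemma in the semisimple category. It is also worth recording that $\mathcal{E}_\Lambda\neq 0$, because $\mathcal{Y}_\boxtimes$ is surjective in the appropriate sense. This makes $a_\Lambda$ a genuine eigenvalue of $\Omega$ and not a vacuous statement.

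The only point requiring care is the naturality step in the super setting: checking that the sign $(-1)^{p(v_1)p(v_2)}$ and the branch convention of $e^{\pi i}x$, as in \cite{HLZ2}, are unaffected by composing with an even homomorphism. I expect this to be immediate, because homomorphisms in $\KL_k$ are even by definition.
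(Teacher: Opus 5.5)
Your proposal is correct and is essentially the paper's proof: the same chain $\Omega({\rm Pr}_\Lambda\circ\mathcal{Y}_\boxtimes)={\rm Pr}_\Lambda\circ\Omega(\mathcal{Y}_\boxtimes)={\rm Pr}_\Lambda\circ c_{\omega_1,\omega_1}\circ\mathcal{Y}_\boxtimes=a_\Lambda\,\mathcal{E}_\Lambda$ using \eqref{braiding2} and \eqref{eqn:braiding1}. The one difference is that you spell out the naturality $\Omega(g\circ\mathcal{Y})=g\circ\Omega(\mathcal{Y})$ for even homomorphisms $g$ and the Schur's lemma justification of \eqref{eqn:braiding1}, both of which the paper leaves implicit.
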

\begin{proof}
Using \eqref{eqn:braiding1}, \eqref{braiding2}, and the definitions, we have
\begin{equation*}
\Omega(\mathcal{E}_\Lambda) =\Omega({\rm Pr}_\Lambda\circ\mathcal{Y}_\boxtimes) ={\rm Pr}_\Lambda\circ\Omega(\mathcal{Y}_\boxtimes) ={\rm Pr}_\Lambda\circ c_{\omega_1,\omega_1}\circ\mathcal{Y}_\boxtimes =a_\Lambda\cdot{\rm Pr}_\Lambda\circ\mathcal{Y}_\boxtimes =a_\Lambda\cdot\mathcal{E}_\Lambda.
\end{equation*}
\end{proof}

By Lemma \ref{lemma:braiding}, we need to relate $\mathcal{E}_\Lambda$ and $\Omega(\mathcal{E}_\Lambda)$. Let $\mathfrak{g}$ be either $\mathfrak{osp}_{1\vert 2n}$ or $\mathfrak{so}_{2n+1}$, and let $L(\Lambda)$ denote the $\mathfrak{g}$-module $M(\Lambda)$ or $N(\Lambda)$. Then for $v_1,v_2\in L(\omega_1)$ (which is the top level of $L_k(\omega_1)$), the formal series $\mathcal{E}_\Lambda(v_1,x)v_2$ satisfies
\begin{equation}\label{eqn:E}
    \mathcal{E}_\Lambda(v_1,x)v_2 \in x^{h(L_k(\Lambda))-2h(L_k(\omega_1))}(\pi_\Lambda(v_1\otimes v_2)+ x L_k(\Lambda)[[x]])
\end{equation}
where $h(L_k(\Lambda))$ and $h(L_k(\omega))$ are the lowest conformal weights defined in Section \ref{aff} and $\pi_\Lambda$ is a surjective even $\mathfrak{g}$-module homomorphism (see for example \cite[Proposition~3]{CMY3}). Thus
\begin{align}\label{eqn:Omega-E}
& a_\Lambda  \cdot\mathcal{E}(v_1,x)v_2  = \Omega(\mathcal{E}_\Lambda)(v_1,x)v_2= (-1)^{p(v_1)p(v_2)} e^{xL_{-1}}\mathcal{E}_\Lambda(v_2,e^{\pi i}x)v_1\nonumber\\
& \in (-1)^{p(v_1)p(v_2)} (e^{\pi i}x)^{h(L_k(\Lambda))-2h(L_k(\omega_1))}(\pi_\Lambda(v_2\otimes v_1)+ x L_k(\Lambda)[[x]]).
\end{align}
Since $\pi_\Lambda$ is surjective, there exist parity-homogeneous $v_1,v_2\in L(\omega_1)$ such that $\pi_\Lambda(v_1\otimes v_2)\neq 0$. Comparing \eqref{eqn:E} and \eqref{eqn:Omega-E}, we need to see how $\pi_\Lambda$ is affected by exchanging such $v_1$ and $v_2$.

For $\Lambda=2\omega_1$, we can take $v_1=v_2=v_{\omega_1}$, a highest-weight vector in $L(\omega_1)$. Then $\pi_{2\omega_1}(v_1\otimes v_2)=\pi_{2\omega_1}(v_2\otimes v_1)\neq 0$, and we get
\begin{equation*}
    a_{2\omega_1} = (-1)^{p(v_{\omega_1})} e^{\pi i(h(L_k(2\omega_1))-2h(L_k(\omega_1)))}.
\end{equation*}
Note $p(v_{\omega_1})=1$ in the $\mathfrak{osp}_{1\vert 2n}$ case since $\omega_1$ is an odd weight.

For $\Lambda =\omega_2$, note that $2\omega_1-\omega_2=\epsilon_1-\epsilon_2=\alpha_1$, so the weight-$\omega_2$ subspace of $L(\omega_1)\otimes L(\omega_1)$ is spanned by $f_{\alpha_1}v_{\omega_1}\otimes v_{\omega_1}\pm v_{\omega_1}\otimes f_{\alpha_1}v_{\omega_1}$. Since $\alpha_1$ is an even root in the $\mathfrak{osp}_{1\vert 2n}$ case, taking $+$ yields a vector in $L(2\omega_1)$ while taking $-$ yields a highest-weight vector in $L(\omega_2)$. It follows that
\begin{equation*}
\pi_{\omega_2}(f_{\alpha_1}v_{\omega_1}\otimes v_{\omega_1}) =-\pi_{\omega_2}(v_{\omega_1}\otimes f_{\alpha_1}v_{\omega_1})\neq 0,
\end{equation*}
and thus
\begin{align*}
    a_{\omega_2} & = -(-1)^{p(f_{\alpha_1}v_{\omega_1})p(v_{\omega_1})} e^{\pi i(h(L_k(\omega_2))-2h(L_k(\omega_1)))}\\
    &= -(-1)^{p(v_{\omega_1})} e^{\pi i(h(L_k(\omega_2))-2h(L_k(\omega_1)))}
\end{align*}
since $f_{\alpha_1}$ is even.

For $\Lambda=0$, note that $\pi_0$ is the unique (up to scale) non-degenerate invariant bilinear form on the vector representation of $\mathfrak{g}$, which is symmetric for $\mathfrak{so}_{2n+1}$ and supersymmetric for $\mathfrak{osp}_{1\vert 2n}$. Thus $\pi_0(v_1\otimes v_2) =(-1)^{p(v_1)p(v_2)}\pi_0(v_2\otimes v_1)$ for all $v_1,v_2\in L(\omega_1)$, which implies that
\begin{equation*}
    a_0=e^{\pi i(h(L_k(0))-2h(L_k(\omega_1)))}= e^{-2\pi i h(L_k(\omega_1))}
\end{equation*}
since $h(L_k(0))=0$.

Now using the conformal weight formulas $h(M_k(\Lambda))=\frac{1}{2(2k+2n+1)}(\Lambda\vert\Lambda+2\rho)_{\mathfrak{so}}$ and $h(N_k(\Lambda))=\frac{1}{2(k+2n-1)}(\Lambda\vert\Lambda+2\rho)_{\mathfrak{so}}$ from Section \ref{aff}, we get
\begin{align*}
    h(M_k(2\omega_1)) & = \frac{2n+1}{2k+2n+1},\qquad h(M_k(\omega_2)) =\frac{2n-1}{2k+2n+1},\\
    h(M_k(\omega_1)) & = \frac{n}{2k+2n+1},
\end{align*}
and
\begin{align*}
    h(N_k(2\omega_1)) & = \frac{2n+1}{k+2n-1}, \qquad h(N_k(\omega_2)) =\frac{2n-1}{k+2n-1},\\
    h(N_k(\omega_1)) & = \frac{n}{k+2n-1},
\end{align*}
Thus for $\KL_k^{\rm ev}(\mathfrak{osp}_{1\vert 2n})$, the eigenvalues of the self-braiding $c_{\omega_1,\omega_1}$ are:
\begin{equation}\label{braidingaffineosp}
    a_{2\omega_1} = -e^{\pi i/(2k+2n+1)},\quad a_{\omega_2} =e^{-\pi i/(2k+2n+1)},\quad a_0=e^{-2\pi i n/(2k+2n+1)},
\end{equation}
and for 
$\KL_k^{\rm ns}(\mathfrak{so}_{2n+1})$, the eigenvalues of the self-braiding $c_{\omega_1,\omega_1}$ are:
\begin{equation}\label{braidingaffineso}
    a_{2\omega_1} = e^{\pi i/(k+2n-1)},\quad a_{\omega_2} =-e^{-\pi i/(k+2n-1)},\quad a_0=e^{-2\pi i n/(k+2n-1)}.
\end{equation}

\section{Kazhdan-Lusztig correspondences}

In \cite{KL1, KL2, KL3, KL4}, Kazhdan and Lusztig showed that for a simply-laced finite-dimensional simple Lie algebra $\mathfrak{g}$ and for (at least most) $k+h^\vee \notin \mathbb{Q}_{\geq 0}$, the category $\KL_k(\mathfrak{g})$ of finite-length $\widehat{\mathfrak{g}}$-modules of level $k$ whose composition factors are grading-restricted modules for the universal affine VOA $V_k(\mathfrak{g})$ is a rigid braided tensor category. Moreover, $\KL_k(\mathfrak{g})$ is braided tensor equivalent to the category of finite-dimensional weight modules for the quantum group $U_q(\mathfrak{g})$ at $q=e^{\pi i/(k+h^\vee)}$.
It is usually stated that this Kazhdan-Lusztig correspondence generalizes to non-simply-laced $\mathfrak{g}$ as well (see for example Section 2.6 and the proof of Proposition 2.8.3 in \cite{Fi}), in which case $q=e^{\pi i/r^\vee(k+h^\vee)}$ where $r^\vee$ is the lacing number of $\mathfrak{g}$, although some results in \cite{KL4} may not have been proved for all levels in non-simply-laced types.


In this section, for $k\in\mathbb{C}\setminus\mathbb{Q}$, we show how Tuba-Wenzl's classification of tensor categories of non-spinorial type \cite{Tuba-Wenzl} recovers the Kazhdan-Lusztig correspondence for the non-spinorial subcategory of $\KL_k(\mathfrak{so}_{2n+1})$, and we also prove the Kazhdan-Lusztig correspondence for $\mathfrak{osp}_{1\vert 2n}$.
Recall that a category $\mathcal{C}$ of non-spinorial type has simple objects $X(\Lambda)$ labeled by $\Lambda\in P^+$, and that the braiding on $\mathcal{C}$ is completely determined by the eigenvalues $(a_{2\omega_1},a_{\omega_2}, a_0)$ of the self-braiding $c_{\omega_1,\omega_1}$ of
\begin{equation*}
X(\omega_1)\otimes X(\omega_1)\cong X(2\omega_1)\oplus X(\omega_2)\oplus X(0).
\end{equation*}
We call $(a_{2\omega_1},a_{\omega_2}, a_0)$ the \textit{braiding eigenvalue triple} of $\mathcal{C}$.

The main theorem of \cite{Tuba-Wenzl} classifies categories of both $\mathfrak{sp}_{2n}$ and non-spinorial $\mathfrak{so}_m$ types.
In particular, by Proposition 8.4 and Theorem 9.4 of \cite{Tuba-Wenzl}, the simple objects of a category of non-spinorial $\mathfrak{so}_{2n+1}$ type can be labeled so that the braiding eigenvalue triple is one of $(q,-q^{-1}, \pm q^{-2n})$ or $(iq,-iq^{-1},\pm i q^{-2n})$ for some $q\in\mathbb{C}^\times$. Moreover, by \cite[Theorem 9.4]{Tuba-Wenzl}, if two categories of non-spinorial $\mathfrak{so}_{2n+1}$ type have the same braiding eigenvalue triple with $q\notin \lbrace \pm 1,\pm i\rbrace$, then they are tensor equivalent. However, it is not clearly stated in \cite{Tuba-Wenzl} whether the tensor equivalence maps simple objects labeled by $\Lambda$ to each other in this case. We address this issue here:

\begin{theorem}\label{maintheoremtubawenzl}
    Let $\mathcal{C}$ and $\widetilde{\mathcal{C}}$ be categories of non-spinorial $\mathfrak{so}_{2n+1}$ type with simple objects $X(\Lambda)$ and $\widetilde{X}(\Lambda)$ for $\Lambda\in P^+$, respectively. If the braiding eigenvalue triples of $\mathcal{C}$ and $\widetilde{\mathcal{C}}$ are the same and the eigenvalues are not roots of unity, then there is a braided tensor equivalence $F:\mathcal{C}\rightarrow\widetilde{\mathcal{C}}$ such that $F(X(\Lambda))\cong \widetilde{X}(\Lambda)$.
\end{theorem}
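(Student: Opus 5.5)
We start from \cite[Theorem 9.4]{Tuba-Wenzl}: since the eigenvalue triples agree and are not roots of unity (so $q\notin\{\pm1,\pm i\}$), it gives a braided tensor equivalence $G:\mathcal{C}\rightarrow\widetilde{\mathcal{C}}$. The work is to arrange that $G(X(\Lambda))\cong\widetilde{X}(\Lambda)$ for all $\Lambda\in P^+$. Since $G$ is an equivalence, it permutes simple objects, say $G(X(\Lambda))\cong\widetilde{X}(\sigma(\Lambda))$ for a bijection $\sigma$ of $P^+$ preserving unit and fusion rules $N_{\Lambda,\Lambda'}^{\ \ \Lambda''}$.

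First we show that the braided equivalence sends $X(\omega_1)$ to $\widetilde{X}(\omega_1)$, possibly after composing $G$ with a braid-compatible relabeling. In Tuba-Wenzl's construction, the equivalence comes from a representation of the BMW-type algebras generated by $c_{\omega_1,\omega_1}$ on $\mathrm{End}(X^{\otimes r})$ with $X$ the chosen generating object; the natural formulation is that $G$ sends the generator $X(\omega_1)$ to $\widetilde{X}(\omega_1)$, since both categories are reconstructed from the tower $\mathrm{End}(X(\omega_1)^{\otimes r})$ together with the same eigenvalue data. If this is not explicit in \cite{Tuba-Wenzl}, we argue combinatorially: $\sigma(\omega_1)$ must be a simple object $Y$ with $Y\otimes Y$ of length three, containing the unit once, and with $\dim\mathrm{End}(Y^{\otimes r})$ matching that of $X(\omega_1)^{\otimes r}$ for all $r$. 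Using the fusion-ring automorphisms of the non-spinorial $\mathfrak{so}_{2n+1}$ ring, which are trivial since the Dynkin diagram $B_n$ has no automorphisms and the ring is generated by $\omega_1$ (growth of $\dim$ under tensoring by $X(\omega_1)$ distinguishes $\omega_1$), we get $\sigma(\omega_1)=\omega_1$.

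Next we propagate to all $\Lambda$. Once $\sigma(\omega_1)=\omega_1$, $\sigma$ is a fusion-ring automorphism fixing the generator, so it is the identity: inductively on a dominance order, each $X(\Lambda)$ appears as the unique new summand of $X(\Lambda-\omega_1')\otimes X(\omega_1)$ for suitable smaller $\Lambda'$, and $\sigma$ preserves multiplicities. Concretely, $\widetilde X(\Lambda)$ is characterized as a summand of $\widetilde{X}(\omega_1)^{\otimes |\Lambda|}$ not appearing in lower tensor powers, determined via the idempotents in $\mathrm{End}(X(\omega_1)^{\otimes r})$, and $G$ maps these idempotents to the corresponding ones since it matches the braid-generated algebras. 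This also shows the eigenvalue labelling $(a_{2\omega_1},a_{\omega_2},a_0)$ is preserved, consistent with $G$ being braided.

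The main obstacle is the first step: verifying that the equivalence of \cite{Tuba-Wenzl} respects the generating object and the labels of the eigenvalues, rather than some twisted relabeling. I would first try to extract this directly from the proof of \cite[Theorem 9.4]{Tuba-Wenzl}, where the categories are identified via the idempotent completion of the BMW-type quotient generated by $X$; failing that, I would use the fusion-ring rigidity argument above, noting that ribbon twists computed from the eigenvalues (via the balancing relation) then separate $X(2\omega_1)$ from $X(\omega_2)$ since the eigenvalues are not roots of unity.
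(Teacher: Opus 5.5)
The main gap is your propagation step. You claim that once $\sigma(\omega_1)=\omega_1$, the bijection $\sigma$ is a fusion-ring automorphism fixing a tensor generator and is therefore the identity. That principle is false in general. In $\mathrm{Rep}(Q_8)$ the $2$-dimensional simple $V$ tensor generates, $V\otimes V\cong\mathbf{1}\oplus a\oplus b\oplus c$, and any permutation of $a,b,c$ fixes $V$ and preserves all fusion rules. Your inductive justification does not rescue it here, because $X(\omega_1)\otimes X(\mu)$ typically has several new summands. Already $X(\omega_1)^{\otimes 2}$ contains both $X(2\omega_1)$ and $X(\omega_2)$, and for $n\ge 2$ the product $X(\omega_1)\otimes X(2\omega_1)$ has two new summands, of highest weights $3\epsilon_1$ and $2\epsilon_1+\epsilon_2$.

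Saying that $G$ sends idempotents in $\mathrm{End}(X(\omega_1)^{\otimes r})$ to ``the corresponding ones'' is circular. The labels on $\widetilde{\mathcal{C}}$ are abstract, and what has to be proved is precisely that no relabeling other than the identity is compatible with the fusion rules once $0,\omega_1,\omega_2,2\omega_1$ are matched. The paper proves this combinatorially, by induction on the size $N\ge 3$ of the Young diagram $P$ in the labeling of \cite[Section 6.1]{Tuba-Wenzl}:
\begin{itemize}
\item If $P=(\lambda_1^{p_1},\lambda_2^{p_2},\ldots)$ has at least two distinct parts, removing a box from row $p_1$ or from row $p_1+p_2$ gives distinct diagrams $P',P''$ of size $N-1$. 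Then $X_P$ is the only size-$N$ summand common to $X(\omega_1)\otimes X_{P'}$ and $X(\omega_1)\otimes X_{P''}$.
\item If $P=(\lambda^p)$ is rectangular, the other size-$N$ summands of $X(\omega_1)\otimes X_{(\lambda^{p-1},\lambda-1)}$ have two distinct parts and are therefore already matched.
\end{itemize}
Level $2$ is not reached this way at all; it comes from the braiding eigenvalues.

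Your first step is also left unproved, and braidedness is assumed rather than established. The paper uses \cite[Theorem 9.4]{Tuba-Wenzl} only as producing a tensor equivalence. You take it to be braided, and then use that to separate $X(2\omega_1)$, $X(\omega_2)$, $X(0)$ by their eigenvalues. Your fallback for $\sigma(\omega_1)=\omega_1$ appeals to the absence of Dynkin diagram automorphisms of $B_n$. That does not show the based ring has no nontrivial automorphisms. An invariant such as the number of simple summands of $Y\otimes Y$ might single out $\omega_1$, but you would have to verify it over all non-spinorial $Y$.

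The paper avoids this by looking inside the construction. \cite[Theorem 8.5]{Tuba-Wenzl} gives a monoidal equivalence $G:\mathcal{A}\to\widetilde{\mathcal{A}}$ with $G(X(\omega_1)^{\otimes m})=\widetilde{X}(\omega_1)^{\otimes m}$ and $G(c_{\omega_1,\omega_1})=\widetilde{c}_{\omega_1,\omega_1}$. Because the three eigenvalues are distinct, $\{\mathrm{id},c_{\omega_1,\omega_1}^{\pm1}\}$ is a basis of $\mathrm{End}(X(\omega_1)^{\otimes 2})$. So each projection $\pi_\Lambda$ is the same linear combination on both sides, and $G(\pi_\Lambda)=\widetilde{\pi}_\Lambda$. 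Choosing $\mathcal{C}\to\mathrm{Ab}\,\mathcal{A}$ to send $X(\Lambda)$ to $(X(\omega_1)^{\otimes 2},\pi_\Lambda)$, and similarly for $\widetilde{\mathcal{C}}$, then gives $F(X(\Lambda))\cong\widetilde{X}(\Lambda)$ for $\Lambda=0,\omega_1,\omega_2,2\omega_1$.

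Braidedness is proved only at the end. $X(\omega_1)$ tensor generates, and once the labels match, the transported self-braiding acts by $a_\Lambda$ on the $\widetilde{X}(\Lambda)$-summand of $\widetilde{X}(\omega_1)^{\otimes 2}$. Since the eigenvalue triples agree, this is exactly $\widetilde{c}_{\omega_1,\omega_1}$.
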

\begin{proof}
    By \cite[Theorem 9.4]{Tuba-Wenzl}, there is a tensor equivalence $F:\mathcal{C}\rightarrow\widetilde{\mathcal{C}}$ obtained in the proof of \cite[Theorem 9.3]{Tuba-Wenzl}. To construct $F$, the first step is to take the full monoidal subcategories $\mathcal{A}$ and $\widetilde{\mathcal{A}}$ of $\mathcal{C}$ and $\widetilde{\mathcal{C}}$ with objects $\lbrace X(\omega_1)^{\otimes m}\rbrace_{m\in\mathbb{Z}_{\geq 0}}$ and $\lbrace \widetilde{X}(\omega_1)^{\otimes m}\rbrace_{m\in\mathbb{Z}_{\geq 0}}$, respectively. Using \cite[Theorem 8.5]{Tuba-Wenzl}, there is a monoidal equivalence $G: \mathcal{A}\rightarrow\widetilde{\mathcal{A}}$ such that $G(X(\omega_1)^{\otimes m}) = \widetilde{X}(\omega_1)^{\otimes m}$ for all $m$ and $G(c_{\omega_1,\omega_1})$ is the self-braiding $\widetilde{c}_{\omega_1,\omega_1}\in{\rm End}_{\widetilde{\mathcal{C}}}(\widetilde{X}(\omega_1)^{\otimes 2})$. Now, $\lbrace\id, c_{\omega_1,\omega_1}^{\pm 1}\rbrace$ is a basis of $\mathrm{End}_{\mathcal{C}}(X(\omega_1)^{\otimes 2})$ since one can check that the $3\times 3$ matrix formed from the eigenvalue triples of these three morphisms is non-invertible only if $a_{2\omega_1}$ is $\pm i$ or a $(4n-2)$nd root of unity. Thus for $\Lambda=2\omega_1,\omega_2,0$, the projection $\pi_\Lambda\in\mathrm{End}_\mathcal{C}(X(\omega_1)^{\otimes 2})$ with image $X(\Lambda)$ is a linear combination of $\id$ and $c_{\omega_1,\omega_1}^{\pm1}$, and since the braiding eigenvalue triples of $\mathcal{C}$ and $\widetilde{\mathcal{C}}$ are equal, the projection $\widetilde{\pi}_\Lambda\in\mathrm{End}_{\widetilde{\mathcal{C}}}(\widetilde{X}(\omega_1)^{\otimes 2})$ with image $\widetilde{X}(\Lambda)$ is the same linear combination of $\id$ and $\widetilde{c}_{\omega_1,\omega_1}^{\pm1}$. Since $G(c_{\omega_1,\omega_1})=\widetilde{c}_{\omega_1,\omega_1}$, we thus get $G(\pi_\Lambda) = \widetilde{\pi}_\Lambda$ for $\Lambda=2\omega_1,\omega_2,0$.

    Now in the proof of \cite[Theorem 9.3]{Tuba-Wenzl}, the tensor equivalence $F:\mathcal{C}\rightarrow\widetilde{\mathcal{C}}$ is obtained from $G:\mathcal{A}\rightarrow\widetilde{\mathcal{A}}$ using \cite[Theorems 3.4 and 3.5]{Tuba-Wenzl}. From these theorems, $F$ is the composition of tensor equivalences $\mathcal{C}\rightarrow{\rm Ab}\,\mathcal{A}\rightarrow{\rm Ab}\,\widetilde{\mathcal{A}}\rightarrow\widetilde{\mathcal{C}}$, where the middle arrow is induced by $G$ and ${\rm Ab}\,\mathcal{A}$
    is the idempotent completion of the additive category generated by $\mathcal{A}$ (and ${\rm Ab}\,\widetilde{\mathcal{A}}$ is similar).  Objects of ${\rm Ab}\,\mathcal{A}$ are ordered pairs $(X,\pi)$ where $X$ is a finite direct sum of objects in $\mathcal{A}$ and $\pi\in\mathrm{End}_{\mathcal{C}}(X)$ is idempotent. By the proof of \cite[Theorem 3.4]{Tuba-Wenzl},  we can choose the equivalence $\mathcal{C}\rightarrow{\rm Ab}\,\mathcal{A}$ to send 
$X(\omega_1)$ to $(X(\omega_1),\id_{X(\omega_1)})$ and $X(\Lambda)$ to $(X(\omega_1)^{\otimes 2}, \pi_\Lambda)$ for $\Lambda=2\omega_1,\omega_2,0$. 
We can also choose the third arrow ${\rm Ab}\,\widetilde{\mathcal{A}}\rightarrow\widetilde{\mathcal{C}}$ of $F$ to be a quasi-inverse of a tensor equivalence $\widetilde{\mathcal{C}}\rightarrow{\rm Ab}\,\widetilde{\mathcal{A}}$ defined in the same way on $\widetilde{X}(\Lambda)$ for $\Lambda=\omega_1,2\omega_1,\omega_2,0$. Then since $G(\id_{X(\omega_1)})=\id_{\widetilde{X}(\omega_1)}$ and $G(\pi_\Lambda)=\widetilde{\pi}_{\Lambda}$ for $\Lambda=2\omega_1,\omega_2,0$, these choices ensure that $F(X(\Lambda))\cong\widetilde{X}(\Lambda)$ for $\Lambda=0, \omega_1,\omega_2, 2\omega_1$.

    To show that $F(X(\Lambda))\cong\widetilde{X}(\Lambda)$ for all $\Lambda\in P^+$, we need some fusion rules from \cite[Section 6.1]{Tuba-Wenzl}. There, it is recalled that the simple objects of $\mathcal{C}$ and $\widetilde{\mathcal{C}}$ can be alternatively labeled by Young diagrams with at most $2n+1$ boxes in the first two columns, equivalently by integer partitions $P=(\lambda_1,\lambda_2,\ldots,\lambda_k)$ such that $\lambda_1\geq\lambda_2\geq\cdots\geq\lambda_k>0$ and $\lambda_1+\lambda_2\leq 2n+1$. If $X_P$ is the simple object of $\mathcal{C}$ labeled by $P$, then $X(\omega_1)\otimes X_P$ is the direct sum of $X_{P'}$ such that the Young diagram of $P'$ can be obtained from that of $P$ by adding or removing one box (and $\widetilde{\mathcal{C}}$ has the analogous fusion rule). The weights $0,\omega_1,\omega_2$, $2\omega_1$ correspond to Young diagrams with at most two boxes, so we already know that $F(X_P)\cong\widetilde{X}_P$ for partitions $P$ of $0$, $1$, and $2$. We now show that $F(X_P)\cong\widetilde{X}_P$ for partitions $P$ of any $N$ by induction on $N$. Thus assume that $P$ is a partition of $N\geq 3$ and that $F(X_{P'})\cong \widetilde{X}_{P'}$ for partitions $P'$ of any $N'<N$. 
    
    First assume that $P$ has at least two distinct parts, with the largest two parts occurring $p_1$ and $p_2$ times: $P=(\lambda_1^{p_1},\lambda_2^{p_2},\lambda_3,\ldots,\lambda_k)$ where $\lambda_1>\lambda_2>\lambda_3$. Then we define
     two distinct partitions of $N-1$:
    \begin{equation*}
P' =(\lambda_1^{p_1-1},\lambda_1-1,\lambda_2^{p_2},\lambda_3\ldots, \lambda_k),\qquad P''=(\lambda_1^{p_1},\lambda_2^{p_2-1},\lambda_2-1,\lambda_3,\ldots,\lambda_{k}).
    \end{equation*}
Both $X(\omega_1)\otimes X_{P'}$ and $X(\omega_1)\otimes X_{P''}$ contain $X_P$ as a direct summand, but there is no other partition $Q$ of $N$ such that both tensor products contain $X_Q$. Indeed, if $X_Q\subseteq X(\omega_1)\otimes X_{P'}$ and $Q\neq P$, then $\lambda_1$ is a part of $Q$ either $p_1-2$ or $p_1-1$ times, and in the second case $\lambda_1+1$ is not a part, while if $X_Q\subseteq X(\omega_1)\otimes X_{P''}$ and $Q\neq P$, then $\lambda_1$ is a part of $Q$ either $p_1+1$, $p_1$, or $p_1-1$ times, and in the last case, $\lambda_1+1$ is a part.
Now since $F$ is a tensor functor and $F(X_{P'})\cong\widetilde{X}_{P'}$ by induction, $F(X_P)$ is isomorphic to a simple direct summand $\widetilde{X}_Q$ of
\begin{equation*}
    F(X(\omega_1)\otimes X_{P'})\cong F(X(\omega_1))\otimes F(X_{P'})\cong\widetilde{X}(\omega_1)\otimes\widetilde{X}_{P'},
\end{equation*}
and $Q$ is a partition of $N$ since otherwise it would be a partition of $N-2$, but then $\widetilde{X}_Q$ would already be isomorphic to $F(X_Q)$ by induction. Similarly, $\widetilde{X}_Q$ must be a direct summand of $\widetilde{X}(\omega_1)\otimes\widetilde{X}_{P''}$, forcing $Q=P$. Thus $F(X_P)\cong\widetilde{X}_P$ when $P$ is a partition of $N$ with at least two distinct parts.

Now let $P=(\lambda^p)$ have one distinct part, and set $P'=(\lambda^{p-1},\lambda-1)$. Again since $F$ is a tensor functor and by induction, $F(X_P)$ is a direct summand of
\begin{equation*}
    \widetilde{X}(\omega_1)\otimes\widetilde{X}_{P'} \cong \left[\widetilde{X}_{(\lambda+1,\lambda^{p-2},\lambda-1)}\oplus\right] \widetilde{X}_P\oplus\widetilde{X}_{(\lambda^{p-1},\lambda-1,1)}\oplus\widetilde{Y},
\end{equation*}
where $\widetilde{Y}$ is a sum of $\widetilde{X}_Q$ for partitions $Q$ of $N-2$, and the summand in brackets does not occur if $p=1$. Now, both $(\lambda+1,\lambda^{p-2},\lambda-1)$ and $(\lambda^{p-1},\lambda-1,1)$ have two distinct parts unless $p=1$ and $\lambda=2$, in which case $N=2$. Since $N\geq 3$, the previous paragraph and induction imply that $F(X_P)$ cannot be isomorphic to $\widetilde{X}_{(\lambda+1,\lambda^{p-2},\lambda-1)}$, $\widetilde{X}_{(\lambda^{p-1},\lambda-1,1)}$, or any direct summand of $\widetilde{Y}$. Thus $F(X_P)\cong\widetilde{X}_P$, completing the inductive proof of $F(X(\Lambda))\cong\widetilde{X}(\Lambda)$ for all $\Lambda\in P^+$.

Finally, we need to show the tensor equivalence $F$ is braided. Since $X(\omega_1)$ tensor generates $\mathcal{C}$, it is enough to show that the isomorphism
\begin{align*}
    F(X(\omega_1))\otimes F(X(\omega_1))  \xrightarrow{\sim} & \,F(X(\omega_1)\otimes X(\omega_1))\\
    \xrightarrow{F(c_{\omega_1,\omega_1})} &\,F(X(\omega_1)\otimes X(\omega_1))\xrightarrow{\sim} F(X(\omega_1))\otimes F(X(\omega_1))
\end{align*}
(where the first and third arrows are from the data of the tensor functor $F$) is the self-braiding of $F(X(\omega_1))\otimes F(X(\omega_1))$ in $\widetilde{\mathcal{C}}$. In fact, under the identification
\begin{equation}\label{F-braid-1}
    F(X(\omega_1))\otimes F(X(\omega_1))\cong\widetilde{X}(\omega_1)\otimes\widetilde{X}(\omega_1)\cong\widetilde{X}(2\omega_1)\oplus\widetilde{X}(\omega_2)\oplus\widetilde{X}(0),
\end{equation}
this isomorphism is identified with
\begin{equation}\label{F-braid-2}
    a_{2\omega_1}\cdot\id_{\widetilde{X}(2\omega_1)}\oplus a_{\omega_2}\cdot\id_{\widetilde{X}(\omega_2)}\oplus a_{0}\cdot\id_{\widetilde{X}(0)},
\end{equation}
    where $(a_{2\omega_1},a_{\omega_2},a_0)$ is the braiding eigenvalue triple of $\mathcal{C}$. Since $\widetilde{\mathcal{C}}$ has the same braiding eigenvalue triple, and since the braiding on $\widetilde{\mathcal{C}}$ is natural, \eqref{F-braid-2} is indeed identified with the self-braiding of $F(X(\omega_1))\otimes F(X(\omega_1))$ under \eqref{F-braid-1}. This proves that $F$ is braided.
\end{proof}

Since we have determined the braiding eigenvalue triples of the representation categories of the affine Lie (super)algebras and quantum (super)groups of $\mathfrak{so}_{2n+1}$ and $\mathfrak{osp}_{1|2n}$, we can now obtain braided tensor equivalences between them.
First, comparing \eqref{quantum-so-braiding} and \eqref{braidingaffineso} and applying Theorem \ref{maintheoremtubawenzl}, we obtain the Kazhdan-Lusztig correspondence for non-spinorial $\mathfrak{so}_{2n+1}$ at irrational levels:
\begin{theorem}\label{thm:KLB}
    For $k\in\mathbb{C}\setminus\mathbb{Q}$ and $\zeta = e^{\pi i/2(k+2n-1)}$, there is a braided tensor equivalence $\KL_k^{\rm ns}(\mathfrak{so}_{2n+1})\rightarrow\mathcal{C}^{\rm ns}_\zeta(\mathfrak{so}_{2n+1})$ sending $N_k(\Lambda)$ to the finite-dimensional irreducible $U_\zeta(\mathfrak{so}_{2n+1})$-module of highest weight $\Lambda$.
\end{theorem}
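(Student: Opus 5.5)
The plan is to apply Theorem \ref{maintheoremtubawenzl} with $\mathcal{C}=\KL_k^{\rm ns}(\mathfrak{so}_{2n+1})$, $X(\Lambda)=N_k(\Lambda)$, and $\widetilde{\mathcal{C}}=\mathcal{C}^{\rm ns}_\zeta(\mathfrak{so}_{2n+1})$, $\widetilde{X}(\Lambda)$ the irreducible $U_\zeta(\mathfrak{so}_{2n+1})$-module of highest weight $\Lambda$. Both are already known to be semisimple braided tensor categories of non-spinorial $\mathfrak{so}_{2n+1}$ type. For $\KL_k^{\rm ns}(\mathfrak{so}_{2n+1})$ this is the discussion following Theorem \ref{thm:ss-BTC}, using the Frenkel--Zhu--Li fusion rules and the Rittenberg--Scheunert tensor ring. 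For $\mathcal{C}^{\rm ns}_\zeta(\mathfrak{so}_{2n+1})$ it holds because $\zeta$ is not a root of unity when $k\notin\mathbb{Q}$. So it remains to check two things: the braiding eigenvalue triples coincide, and the eigenvalues are not roots of unity.

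For the first point, set $\zeta=e^{\pi i/2(k+2n-1)}$, so that $\zeta^2=e^{\pi i/(k+2n-1)}$ and $\zeta^{-4n}=e^{-2\pi i n/(k+2n-1)}$. Substituting into \eqref{quantum-so-braiding} gives
\[
(a_{2\omega_1},a_{\omega_2},a_0)=\bigl(e^{\pi i/(k+2n-1)},\,-e^{-\pi i/(k+2n-1)},\,e^{-2\pi i n/(k+2n-1)}\bigr),
\]
which is exactly \eqref{braidingaffineso}. The labelings match because both triples were computed with respect to the summands labeled $2\omega_1,\omega_2,0$ of the $\omega_1$ self-tensor product, and in each category the simple objects carry their natural highest-weight labels $\Lambda\in P^+$.

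For the second point, since $k\in\mathbb{C}\setminus\mathbb{Q}$, the quantity $1/(k+2n-1)$ is irrational, possibly non-real. Hence $e^{\pi i m/(k+2n-1)}$ is never a root of unity for $m\neq0$. Each eigenvalue is $\pm$ such an exponential with $m=1,-1,-2n$, so none is a root of unity. In particular $a_{2\omega_1}\notin\{\pm i\}$ and is not a $(4n-2)$nd root of unity, which is the hypothesis used inside the proof of Theorem \ref{maintheoremtubawenzl}.

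Theorem \ref{maintheoremtubawenzl} then gives a braided tensor equivalence $F$ with $F(N_k(\Lambda))\cong\widetilde{X}(\Lambda)$ for all $\Lambda\in P^+$, as claimed. The main point of care is the first step. One must confirm that the eigenvalues \eqref{quantum-so-braiding} for the standard $R$-matrix, derived from the balancing and trace identities, are correct. In particular, the sign choices in solving \eqref{balancing} and \eqref{twistbraiding} must be consistent with the sign of $a_{\omega_2}$ in \eqref{braidingaffineso}. Otherwise one would have to use $\zeta^{-1}$ or the reverse braiding instead. I would recheck \eqref{twistbraiding} by expanding the dimensions $d_{2\omega_1}$, $d_{\omega_2}$, $d_{\omega_1}$ as Laurent polynomials in $\zeta$, and confirm that only the sign assignment $(\zeta^2,-\zeta^{-2},\zeta^{-4n})$ satisfies it.
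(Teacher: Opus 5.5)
Your proposal is correct and is essentially the paper's argument. The paper proves Theorem~\ref{thm:KLB} by noting that \eqref{quantum-so-braiding} at $\zeta=e^{\pi i/2(k+2n-1)}$ coincides with \eqref{braidingaffineso}, and then applying Theorem~\ref{maintheoremtubawenzl}, with $k\notin\mathbb{Q}$ ensuring that no eigenvalue is a root of unity. Your proposed recheck of the signs in \eqref{quantum-so-braiding} via \eqref{twistbraiding} does work out, once $d_{2\omega_2}$ there is read as the typo for $d_{\omega_2}$, so you do not need to switch to $\zeta^{-1}$ or to the reversed braiding.
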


\begin{rema}
Assuming the results of \cite{KL1,KL2,KL3,KL4} generalize to $\mathfrak{so}_{2n+1}$ at irrational levels, the entire categories $\KL_k(\mathfrak{so}_{2n+1})$ and $\mathcal{C}_\zeta(\mathfrak{so}_{2n+1})$ are also braided tensor equivalent for $\zeta=e^{\pi i/2(k+2n-1)}$.
\end{rema}

Next, comparing \eqref{braidingaffineosp} with the result of \cite[Lemma 7.2]{B} quoted in Section \ref{subsec:qg-braidings} and applying Theorem \ref{maintheoremtubawenzl}, 
we establish the Kazhdan-Lusztig correspondence for $\mathfrak{osp}_{1|2n}$ at irrational levels:
\begin{theorem}\label{thm:KLosp}
    For $k\in\mathbb{C}\setminus\mathbb{Q}$ and $q=e^{\pi i/(2k+2n+1)}$, there is a braided tensor equivalence 
     $\KL_k^{\rm ev}(\mathfrak{osp}_{1|2n})\rightarrow \mathcal{C}_q^{\rm ev}(\mathfrak{osp}_{1|2n})$ sending $M_k(\Lambda)$ to the even finite-dimensional $U_q(\mathfrak{osp}_{1\vert 2n})$-module of highest weight $\Lambda$. 
\end{theorem}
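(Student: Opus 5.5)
The plan is to apply Theorem \ref{maintheoremtubawenzl} with $\mathcal{C}=\KL_k^{\rm ev}(\mathfrak{osp}_{1|2n})$ and $\widetilde{\mathcal{C}}=\mathcal{C}_q^{\rm ev}(\mathfrak{osp}_{1|2n})$, so three things must be in place.

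First, both categories must be of non-spinorial $\mathfrak{so}_{2n+1}$ type, with their simple objects labeled in the stated way. For $\KL_k^{\rm ev}(\mathfrak{osp}_{1|2n})$ this follows from Theorem \ref{thm:ss-BTC} and the fusion rules $M_k(\Lambda)\boxtimes M_k(\Lambda')\cong\bigoplus N_{\Lambda,\Lambda'}^{\ \ \Lambda''}M_k(\Lambda'')$ proved there, with $X(\Lambda)=M_k(\Lambda)$. For $\mathcal{C}_q^{\rm ev}(\mathfrak{osp}_{1|2n})$ it follows from Zhang's tensor equivalence with $\mathcal{C}_\zeta^{\rm ns}(\mathfrak{so}_{2n+1})$, which matches highest weights. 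There we take $\widetilde{X}(\Lambda)$ to be the even irreducible $U_q(\mathfrak{osp}_{1|2n})$-module of highest weight $\Lambda$.

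Second, the braiding eigenvalue triples must agree. Equation \eqref{braidingaffineosp} gives $(-q,\,q^{-1},\,q^{-2n})$ for $\KL_k^{\rm ev}(\mathfrak{osp}_{1|2n})$ with $q=e^{\pi i/(2k+2n+1)}$, since $e^{-2\pi i n/(2k+2n+1)}=q^{-2n}$. The braiding chosen on $\mathcal{C}_q^{\rm ev}(\mathfrak{osp}_{1|2n})$ from \cite[Lemma 7.2]{B} gives exactly $(-q,q^{-1},q^{-2n})$ for the same $q$.

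Third, the eigenvalues must not be roots of unity. Since $k\notin\mathbb{Q}$, also $1/(2k+2n+1)\notin\mathbb{Q}$. Hence $q$, $-q$, $q^{-1}$ and $q^{-2n}$ are not roots of unity. In particular $q$ is not a root of unity, so the quantum group category is semisimple as stated in Section \ref{subsec:qg-braidings}.

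Theorem \ref{maintheoremtubawenzl} then yields a braided tensor equivalence $F$ with $F(M_k(\Lambda))\cong\widetilde{X}(\Lambda)$, which is the claim.

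The main obstacle is making sure the labelings in the two categories match the conventions under which the eigenvalues were computed. In particular, $\omega_1$, $2\omega_1$ and $\omega_2$ must label the same summands of $X(\omega_1)^{\otimes 2}$ in both categories. This relies on the eigenvalue computation in \cite{B} being done for the even module of highest weight $\omega_1$, whose highest-weight vector is odd. One should also check that the sign conventions for the super braiding there agree with those of the vertex-algebraic skew-symmetry $\Omega$, which carries the factor $(-1)^{p(v_1)p(v_2)}$. If they disagreed, one would see the other triple $(-q^{-1},q,q^{2n})$ of \cite{Zhang:1992a}. In that case the remedy is to replace $q$ by $q^{-1}$, or equivalently to use the reverse braiding.
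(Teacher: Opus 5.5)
Your proposal is correct and matches the paper's argument: you check that both categories are of non-spinorial $\mathfrak{so}_{2n+1}$ type with highest-weight labelings, observe that \eqref{braidingaffineosp} and \cite[Lemma 7.2]{B} both give the triple $(-q,q^{-1},q^{-2n})$ with $q$ not a root of unity, and then apply Theorem \ref{maintheoremtubawenzl}. The sign-convention worry in your last paragraph does not arise, because the paper defines the braiding on $\mathcal{C}_q^{\rm ev}(\mathfrak{osp}_{1|2n})$ to be the one from \cite[Lemma 7.2]{B}, computed on the even module of highest weight $\omega_1$.
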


Finally, we observe that the tensor equivalence between the representation categories of quantum $\mathfrak{so}_{2n+1}$ and $\mathfrak{osp}_{1\vert 2n}$ established in \cite{Zhang:1992b} is braided:
\begin{theorem}\label{thm:QQ}
    If $\zeta\in\mathbb{C}^\times$ is not a root of unity, then there is a braided tensor equivalence $\mathcal{C}_{\zeta}^{\rm ns}({\mathfrak{so}_{2n+1}})\rightarrow\mathcal{C}_{-\zeta^2}^{\rm ev}({\mathfrak{osp}_{1|2n}})$ that matches (even) irreducible modules of highest weight $\Lambda$ on both sides.
\end{theorem}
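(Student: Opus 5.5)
The plan is to apply Theorem \ref{maintheoremtubawenzl} directly. Both $\mathcal{C}_{\zeta}^{\rm ns}(\mathfrak{so}_{2n+1})$ and $\mathcal{C}_{-\zeta^2}^{\rm ev}(\mathfrak{osp}_{1|2n})$ are semisimple braided tensor categories of non-spinorial $\mathfrak{so}_{2n+1}$ type. For the first this is noted in Section \ref{subsec:qg-braidings}. For the second it follows from Zhang's tensor equivalence \cite[Theorem~3.3 and Lemma~3.4]{Zhang:1992b} with $\mathcal{C}_\zeta^{\rm ns}(\mathfrak{so}_{2n+1})$, or alternatively from \cite[Theorem~2.1 and Theorem~3.5]{Zhang:1992b} together with the Rittenberg--Scheunert fusion rules. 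So it suffices to check that, with the labelings by highest weights, the two braiding eigenvalue triples coincide and are not roots of unity.

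For $\mathcal{C}_\zeta^{\rm ns}(\mathfrak{so}_{2n+1})$ the triple was computed in \eqref{quantum-so-braiding}:
\[
(a_{2\omega_1},a_{\omega_2},a_0)=(\zeta^2,-\zeta^{-2},\zeta^{-4n}).
\]
For $\mathcal{C}_q^{\rm ev}(\mathfrak{osp}_{1|2n})$ with the braiding from \cite[Lemma 7.2]{B}, the triple is $(-q,q^{-1},q^{-2n})$. Substituting $q=-\zeta^2$ gives
\[
(-q,\,q^{-1},\,q^{-2n})=(\zeta^2,\,-\zeta^{-2},\,\zeta^{-4n}),
\]
using $(-1)^{-2n}=1$. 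The two triples agree. Since $\zeta$ is not a root of unity, none of $\zeta^2$, $-\zeta^{-2}$, $\zeta^{-4n}$ is a root of unity, and $q=-\zeta^2$ is not a root of unity either. The latter is needed so that $\mathcal{C}_q^{\rm ev}(\mathfrak{osp}_{1|2n})$ is defined and semisimple as in Section \ref{subsec:qg-braidings}.

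Theorem \ref{maintheoremtubawenzl} then yields a braided tensor equivalence $\mathcal{C}_{\zeta}^{\rm ns}(\mathfrak{so}_{2n+1})\to\mathcal{C}_{-\zeta^2}^{\rm ev}(\mathfrak{osp}_{1|2n})$ sending $X(\Lambda)$ to $\widetilde{X}(\Lambda)$, which are the (even) irreducibles of highest weight $\Lambda$.

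The main obstacle is making sure the labeling conventions really match. In \eqref{quantum-so-braiding} the eigenvalues $a_{2\omega_1},a_{\omega_2},a_0$ must be attached to the summands of highest weight $2\omega_1,\omega_2,0$ in $X(\omega_1)^{\otimes 2}$ on both sides. On the quantum-$\mathfrak{osp}$ side this means checking that the even module of highest weight $\omega_1$ and the decomposition used in \cite{B} follow the same conventions as here. The quoted result of \cite{B} must also be stated for the even parity choice and the same normalization of $q$ as in Zhang's presentation; if it is not, an overall sign or inversion $q\mapsto q^{-1}$ could appear.

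As a cross-check on this identification, I would verify the balancing constraint $a_\Lambda^2=\theta_\Lambda\theta_{\omega_1}^{-2}$ on the $\mathfrak{osp}$ side with the twist transported from the $\mathfrak{so}$ side. I would also compare against the affine computation \eqref{braidingaffineosp}: with $q=e^{\pi i/(2k+2n+1)}$, the triple $(-q,q^{-1},q^{-2n})$ is exactly what \eqref{braidingaffineosp} gives, which corroborates the choice of braiding from \cite{B}.
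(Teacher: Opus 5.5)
Your proposal is correct and matches the paper's argument. The paper likewise compares the triple $(\zeta^2,-\zeta^{-2},\zeta^{-4n})$ from \eqref{quantum-so-braiding} with the triple $(-q,q^{-1},q^{-2n})$ of \cite[Lemma 7.2]{B} at $q=-\zeta^2$, uses Zhang's tensor equivalence only to know that $\mathcal{C}^{\rm ev}_q(\mathfrak{osp}_{1|2n})$ is of non-spinorial type, and applies Theorem \ref{maintheoremtubawenzl}; your substitution and non-root-of-unity check are both correct.
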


As different values of $k$ in Theorems \ref{thm:KLB} and \ref{thm:KLosp} may yield the same values of $\zeta$ and $q$, we can summarize Theorems \ref{thm:KLB}, \ref{thm:KLosp}, and \ref{thm:QQ} as follows:

\begin{theorem}\label{thm:all}
If $k,\ell\in\mathbb{C}\setminus\mathbb{Q}$ are related by
\[
\frac{1}{2k+2n+1} = \frac{1}{\ell+ 2n-1} + 1 \ \ ({\rm mod}\  2\mathbb Z)
\]
and $q,\zeta\in\mathbb{C}^\times$ are defined by $q=e^{\pi i/(2k+2n+1)}$ and $\zeta=e^{\pi i/2(\ell+2n-1)}$, so $q=-\zeta^2$ in particular,
then there are braided tensor equivalences
\begin{equation}\label{equivalences}
\KL_k^{\rm ev}(\mathfrak{osp}_{1|2n}) \cong\mathcal{C}_{q}^{\rm ev}(\mathfrak{osp}_{1|2n}) \cong \mathcal{C}_{\zeta}^{\rm ns}({\mathfrak{so}_{2n+1}})\cong  \KL_\ell^{\rm ns}(\mathfrak{so}_{2n+1})
\end{equation}
which match objects labeled by $\Lambda\in P^+$ in each category.
\end{theorem}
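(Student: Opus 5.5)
The plan is to compose the three equivalences already obtained: Theorem \ref{thm:KLosp} at level $k$, Theorem \ref{thm:QQ} (inverted) at parameter $\zeta$, and Theorem \ref{thm:KLB} at level $\ell$ (inverted). The only work is to check that the parameters produced by each theorem agree under the stated relation between $k$ and $\ell$. Since each of these equivalences matches simple objects labeled by $\Lambda\in P^+$, so will their composite.

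First, Theorem \ref{thm:KLB} applied with $\ell$ in place of $k$ gives a braided equivalence $\KL_\ell^{\rm ns}(\mathfrak{so}_{2n+1})\cong\mathcal{C}^{\rm ns}_\zeta(\mathfrak{so}_{2n+1})$ with $\zeta=e^{\pi i/2(\ell+2n-1)}$. Since $\ell\notin\mathbb{Q}$, the exponent $1/2(\ell+2n-1)$ is irrational, so $\zeta$ is not a root of unity and Theorem \ref{thm:QQ} applies. It gives $\mathcal{C}^{\rm ns}_\zeta(\mathfrak{so}_{2n+1})\cong\mathcal{C}^{\rm ev}_{-\zeta^2}(\mathfrak{osp}_{1|2n})$.

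Next, compute
\[
-\zeta^2=e^{\pi i}e^{\pi i/(\ell+2n-1)}=e^{\pi i\left(\frac{1}{\ell+2n-1}+1\right)}.
\]
The hypothesis says $\frac{1}{2k+2n+1}-\frac{1}{\ell+2n-1}-1\in 2\mathbb{Z}$, and $e^{\pi i x}$ depends only on $x$ modulo $2\mathbb{Z}$, so $-\zeta^2=e^{\pi i/(2k+2n+1)}=q$. Theorem \ref{thm:KLosp} then gives $\KL_k^{\rm ev}(\mathfrak{osp}_{1|2n})\cong\mathcal{C}^{\rm ev}_q(\mathfrak{osp}_{1|2n})$, which completes the chain \eqref{equivalences}.

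There is no genuine obstacle; the only point needing care is this modular bookkeeping. One must confirm that the relation is imposed modulo $2\mathbb{Z}$ rather than $\mathbb{Z}$, since a shift by $1$ would replace $q$ by $-q$. It also helps to note that the irrationality of $k$ is equivalent to that of $\ell$ under the relation, so every theorem is applied within its range of validity.
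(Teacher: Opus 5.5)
Your proposal is correct and matches the paper's argument: Theorem \ref{thm:all} is obtained there simply by composing Theorems \ref{thm:KLosp}, \ref{thm:QQ}, and \ref{thm:KLB}, with the relation modulo $2\mathbb{Z}$ guaranteeing $q=-\zeta^2$ exactly as you compute. Your observation that $\zeta$ is not a root of unity (because $1/2(\ell+2n-1)\notin\mathbb{Q}$) is the only side condition needed to invoke Theorem \ref{thm:QQ}, and it holds.
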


\section{Mixed kernel VOAs}

In this section, we use the equivalences of the previous section to construct new simple conformal vertex (super)algebras whose existence was conjectured in \cite{CLNS} (and some results in \cite{CLNS} are conditional on their existence).
We first recall the gluing construction of VOAs from \cite{CKM2}.
If $\cC$ and $\cD$ are semisimple rigid braided tensor categories and $\tau: \cC \rightarrow \cD$ is a braid-reversed tensor equivalence, then the canonical algebra
\[
A = \bigoplus_{X \in {\rm Irr}\,\cC}X^* \boxtimes \tau(X)
\]
is a simple commutative algebra in (the ind-category of) the Deligne tensor product $\mathcal{C}\boxtimes\mathcal{D}$ (see \cite[Main Theorem~1]{CKM2}).
If $\cC$ and $\cD$ are representation categories of VOAs $U$ and $V$, then the vector space tensor product $\otimes$ induces a braided tensor equivalence from $\cC\boxtimes\cD$ to the category of $U\otimes V$-modules whose objects are finite direct sums of modules $M\otimes W$ where $M$ is in $\cC$ and $W$ is in $\cD$ \cite[Theorem 5.5]{CKM2}. If $\cC$ and $\cD$ have infinitely many simple objects, the direct limit completion ${\rm Ind}(\cC\boxtimes\cD)$ is also a vertex algebraic braided tensor category under mild conditions \cite[Theorem 1.1]{CMY1}, and thus the commutative algebra $A$ in ${\rm Ind}(\cC\boxtimes\cD)$ has the structure of a simple conformal vertex algebra extension of $U\otimes V$ \cite[Theorem 3.2]{HKL}. 

In this section, we need one or both of $U$ and $V$ to be $V_k(\mathfrak{osp}_{1\vert 2n})$, so we need the gluing construction to generalize to the case that one or both of $\cC$ and $\cD$ is a suitable module category of a vertex operator superalgebra. The result we get is the following:
\begin{theorem}\label{thm:superalgebra-gluing}
    Let $\cC$ and $\cD$ be locally finite semisimple braided tensor categories of grading-restricted modules for vertex operator superalgebras $U$ and $V$, respectively, and assume:
    \begin{enumerate}
        \item There are no non-zero odd $U$- or $V$-module homomorphisms between simple modules in $\cC$ or $\cD$.

        \item If $\mathcal{Y}$ is an intertwining operator of type $\binom{X}{M_1\otimes W_1\,\,M_2\otimes W_2}$ where $M_1$, $M_2$ are simple modules in $\cC$, $W_1$, $W_2$ are simple modules in $\cD$, and $X$ is a possibly infinite direct sum of modules $M\otimes W$ where $M$ and $W$ are simple objects of $\cC$ and $\cD$, respectively, then the image of $\mathcal{Y}$ in $X$ is a finite direct sum of simple $U\otimes V$-modules.


        \item There is a braid-reversed tensor equivalence $\tau:\cC\rightarrow\cD$.
    \end{enumerate}
    Then $A=\bigoplus_{X\in{\rm Irr}\,\cC} X^*\otimes\tau(X)$ has the structure of a simple conformal vertex superalgebra extending $U\otimes V$.
\end{theorem}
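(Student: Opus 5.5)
The plan is to reduce Theorem \ref{thm:superalgebra-gluing} to the non-super gluing argument of \cite{CKM2} combined with the superalgebra version of the Huang--Kirillov--Lepowsky correspondence between commutative algebras and vertex algebra extensions. I will work in the direct limit completion ${\rm Ind}(\cC\boxtimes\cD)$, realized inside the category of grading-restricted (generalized) $U\otimes V$-modules with \emph{even} morphisms.

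The first step is to identify $\cC\boxtimes\cD$ with the category $\cC\otimes\cD$ of $U\otimes V$-modules that are finite direct sums of $M\otimes W$, as a braided tensor category, following \cite[Theorem 5.5]{CKM2}. Here assumption (1) enters. Since there are no odd homomorphisms between simple objects, the even Hom spaces between simples are one-dimensional or zero. Hence $\Hom_{U\otimes V}(M_1\otimes W_1,M_2\otimes W_2)=\Hom(M_1,M_2)\otimes\Hom(W_1,W_2)$ with only even morphisms, exactly as in the Deligne product of semisimple categories. Without (1), the Hom spaces in the super setting could pick up extra odd tensor terms. Next I will show that the tensor product of $U\otimes V$-modules (defined via intertwining operators) satisfies $(M_1\otimes W_1)\boxtimes(M_2\otimes W_2)\cong(M_1\boxtimes M_2)\otimes(W_1\boxtimes W_2)$. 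The proof splits even intertwining operators of type $\binom{M\otimes W}{M_1\otimes W_1\,M_2\otimes W_2}$ as products $\mathcal{Y}_1\otimes\mathcal{Y}_2$, with the appropriate sign $(-1)^{p(w_1)p(m_2)}$ inserted. I then need to check that associativity and braiding isomorphisms are the Deligne ones, and that the sign conventions in the skew-symmetry $\Omega$ are compatible. These sign checks are routine but must be carried out.

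The second step is to pass to the Ind-category. Assumption (2) is precisely what is needed to apply \cite[Theorem 1.1]{CMY1} (or its superalgebra analogue). Every intertwining operator from finite-length modules into an infinite direct sum has finite-length image, so ${\rm Ind}(\cC\boxtimes\cD)$ is a vertex-algebraic braided tensor category of generalized $U\otimes V$-modules. Local finiteness and semisimplicity make the Ind-objects just arbitrary direct sums of the $M\otimes W$. By assumption (3) and \cite[Main Theorem 1]{CKM2}, which is purely categorical, $A=\bigoplus X^*\boxtimes\tau(X)$ is a simple commutative (haploid) algebra in ${\rm Ind}(\cC\boxtimes\cD)$.

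Finally, I will apply the vertex superalgebra version of \cite[Theorem 3.2]{HKL}, as extended to the super setting in \cite{CKM-exts}, to conclude that $A$ is a vertex superalgebra extension of $U\otimes V$. The parity of $A$ is inherited from the $U\otimes V$-module structure. Simplicity follows from simplicity of $A$ as an algebra, since its ideals correspond to $A$-submodules in the category. Conformality comes from the conformal vector of $U\otimes V$. I also need $A$ to be $\tfrac12\mathbb{Z}$-graded, that is, the twist to act trivially on $A$ up to parity; this follows because $\theta$ is trivial on a commutative algebra, and in the super case it gives $e^{2\pi iL_0}=(-1)^{\rm parity}$ in the appropriate convention.

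I expect the main obstacle to be the super sign bookkeeping in the first step: verifying that the braiding on $U\otimes V$-modules coincides with the Deligne product braiding, given the $(-1)^{p(v_1)p(v_2)}$ in $\Omega$. The same issue affects matching commutativity in the categorical sense with supercommutativity of vertex operators in the HKL step.
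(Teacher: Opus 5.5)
Your outline follows the paper's route. You extend \cite[Theorem 5.5]{CKM2} to identify $\cC\boxtimes\cD$ with finite sums of modules $M\otimes W$, and split intertwining operators as in \cite[Theorem 2.10]{ADL}. You then use Assumption 2 to apply a super version of \cite[Theorem 1.1]{CMY1}, and finish with \cite[Main Theorem 1]{CKM2} and a super version of \cite[Theorem 3.2]{HKL}. Two corrections are needed.

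First, Assumption 1 is the real content of the super generalization, not sign bookkeeping, and you place it only loosely. Your ``Hence'' does not follow from one-dimensionality of Hom spaces between simples, which is just Schur's lemma. The actual point is this: for an even $U\otimes V$-map $f: M_1\otimes W_1\to M_2\otimes W_2$ and homogeneous $m_1\in M_1$, $m_2'\in M_2'$, the map $(m_2'\otimes\id_{W_2})\circ f(m_1\otimes\bullet)$ is a $V$-homomorphism of parity $p(m_1)+p(m_2')$. Assumption 1 forces it to be zero or even. Without this, even maps such as $p_M\otimes p_W: M\otimes W\to\Pi(M)\otimes\Pi(W)$ that are not tensor products of morphisms can occur.

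The same assumption, through the universal property of $\boxtimes_U$ and $\boxtimes_V$, is what excludes odd intertwining operators among modules in $\cC$ and $\cD$. Without that, an even intertwining operator of type $\binom{M_3\otimes W_3}{M_1\otimes W_1\,M_2\otimes W_2}$ could have odd$\otimes$odd components. The splitting you describe would then fail, and $(M_1\boxtimes_U M_2)\otimes(W_1\boxtimes_V W_2)$ would not be the tensor product. You should state this use of Assumption 1 explicitly.

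Second, drop the claim in your last paragraph: it is false and also unnecessary.
\begin{itemize}
\item A commutative algebra need not have trivial twist. From $\mu\circ c_{A,A}^2=\mu$ one only gets $\theta_A\circ\mu=\mu\circ(\theta_A\boxtimes\theta_A)$, so $\theta_A$ is an algebra automorphism, not the identity.
\item The relation $e^{2\pi i L_0}=(-1)^{\rm parity}$ already fails on $U=V_k(\mathfrak{osp}_{1|2n})$, which is $\mathbb{Z}$-graded with odd vectors of conformal weight $1$.
\item No grading condition is needed, because the theorem only asserts a conformal vertex superalgebra, and arbitrary $\mathbb{C}$-gradings are allowed. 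For $Y_A=\mu\circ\mathcal{Y}_\boxtimes$, commutativity gives $Y_A=\Omega(Y_A)$. Applying this twice gives $Y_A(a,x)b=Y_A(a,e^{2\pi i}x)b$, so only integral powers of $x$ occur. The identity $Y_A=\Omega(Y_A)$ is then exactly super skew-symmetry with the sign $(-1)^{p(a)p(b)}$.
\end{itemize}
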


\begin{proof}
Here we just sketch how the results of \cite{CKM2,CMY1,HKL} generalize to superalgebras, particularly emphasizing the role of Assumption 1. Since $\cC$ and $\cD$ are locally finite abelian and semisimple, any of their objects is a finite direct sum of simple objects. The objects of $\cC$ and $\cD$ have $\mathbb{Z}/2\mathbb{Z}$-gradings, and all morphisms are even.
Define $\mathcal{E}$ to be the locally finite abelian and semisimple category of $U\otimes V$-modules consisting of finite direct sums of modules $M\otimes W$ where $M$ is simple in $\cC$ and $W$ is simple in $\cD$.  Here, $\otimes$ is the tensor product of vector superspaces; in particular, if $f$ and $g$ are parity-homogeneous linear maps on $M$ and $W$, then $f\otimes g$ is defined by
\begin{equation*}
    (f\otimes g)(m\otimes w) =(-1)^{p(g)p(m)} f(m)\otimes g(w)
\end{equation*}
for parity-homogeneous $m\in M$, $w\in W$.


We first generalize \cite[Theorem 5.5]{CKM2} to show that the abstract Deligne tensor product $\cC\boxtimes\cD$ is equivalent to $\mathcal{E}$. 
Indeed, by the universal property of Deligne tensor products, there is a functor $F:\cC\boxtimes\cD\rightarrow\mathcal{E}$ such that $F(M\boxtimes W)=M\otimes W$ for all $M$ in $\cC$ and $W$ in $\cD$. This functor is essentially surjective since it is linear and since $\cC\boxtimes\cD$ and $\mathcal{E}$ are semisimple, and to show that $F$ is fully faithful and thus an equivalence, it is enough to show that the linear map
\begin{align}\label{eqn:DTP-iso}
F:\mathrm{Hom}_\cC(M_1,M_2)\otimes\mathrm{Hom}_\cD(W_1,W_2) & \longrightarrow \mathrm{Hom}_{\mathcal{E}}(M_1\otimes W_1,M_2\otimes W_2)\\
f\otimes g & \longmapsto f\otimes g\nonumber
\end{align}
is an isomorphism for all simple $M_1,M_2$ in $\cC$ and $W_1,W_2$ in $\cD$. Here we have the tensor product of vector spaces on the left and of superspaces on the right, but this does not cause problems since all morphisms in $\cC$, $\cD$, and $\mathcal{E}$ are even. 
The proof that \eqref{eqn:DTP-iso} is an isomorphism is just a superalgebra generalization
of the proof of \cite[Proposition 2.10]{Mc}.
However, we need Assumption 1 in one step of the proof that \eqref{eqn:DTP-iso} is surjective, where for any even $U\otimes V$-homomorphism $f: M_1\otimes W_1\rightarrow M_2\otimes W_2$ and any parity-homogeneous $m_1\in M_1$ and $m_2'\in M_2'$ (where $M_2'$ is the graded dual of $M_2$), one considers
\begin{equation*}
(m_2'\otimes\id_{W_2})\circ f(m_1\otimes\bullet).    
\end{equation*}
This is a $V$-homomorphism of parity $p(m_1)+p(m_2')$ and thus is either $0$ or even, so it is in $\mathrm{Hom}_\cD(W_1,W_2)$. This helps show that $f$ is in the image of $F$, and it follows that $F: \mathcal{C}\boxtimes\mathcal{D}\rightarrow\mathcal{E}$ is an equivalence of categories under Assumption 1.



Next, we need to show that $\mathcal{E}$ admits vertex algebraic braided tensor structure such that $F$ is a braided tensor equivalence. Let $\boxtimes_U$ and $\boxtimes_V$ denote the tensor product operations on $\cC$ and $\cD$, and let $\cY_\boxtimes$ denote tensor product intertwining operators in either category. Then examining Theorem 5.3, Remark 5.4, and Theorem 5.5 in \cite{CKM2} and their proofs, and adding sign factors as appropriate, it is sufficient to show that the ordered pair
\begin{equation*}
    \left((M_1\boxtimes_U M_2)\otimes(W_1\boxtimes_V W_2), \mathcal{Y}_\boxtimes\otimes\mathcal{Y}_\boxtimes\right)
\end{equation*}
satisfies the universal property of vertex algebraic tensor products for any modules $M_1, M_2$ in $\cC$ and $W_1,W_2$ in $\cD$. (Again, $\otimes$ here represents tensor products in the category of vector superspaces.) Moreover, by the proof of \cite[Theorem 5.3]{CKM2}, the universal property holds if \cite[Theorem 2.10]{ADL} can be generalized to intertwining operators. That is, for any triples of modules $M_1$, $M_2$, $M_3$ in $\cC$ and $W_1$, $W_2$, $W_3$ in $\cD$, we need the linear map
\begin{align}\label{eqn:intw-op-iso}
I\binom{M_3}{M_1\,M_2}\otimes I\binom{W_3}{W_1\,W_2} & \longrightarrow I\binom{M_3\otimes W_3}{M_1\otimes W_1\,\,M_2\otimes W_2}\\
\mathcal{Y}_U\otimes\mathcal{Y}_V &\longmapsto \mathcal{Y}_U\otimes\mathcal{Y}_V\nonumber
\end{align}
to be an isomorphism, where $I(\bullet)$ represents the vector space of even intertwining operators of the indicated type.

The proof that \eqref{eqn:intw-op-iso} is an isomorphism requires Assumption 1, which by the universal property of vertex algebraic tensor products implies that there are no non-zero odd intertwining operators among modules in $\cC$ or $\cD$. 
 First, \eqref{eqn:intw-op-iso} is injective exactly as in the proof of \cite[Theorem 2.10]{ADL}. To show it is also surjective, suppose $\mathcal{Y}\in I\binom{M_3\otimes W_3}{M_1\otimes W_1\,\,M_2\otimes W_2}$; then one can show that for any parity-homogeneous $m_1\in M_1$, $m_2\in M_2$, and $m_3'\in M_3'$, 
\begin{align*}
    \mathcal{Y}_{m_1,m_2}^{m_3'}: W_1\otimes W_2 &\longrightarrow W_3\lbrace x\rbrace\\
    w_1\otimes w_2 &\longmapsto (-1)^{p(m_2)p(w_1)} (x^{-L_U(0)} m_3'\otimes\id_{W_3})\circ\\
    &\qquad\qquad\circ\mathcal{Y}(x^{L_U(0)}m_1\otimes w_1,x)(x^{L_U(0)}m_2\otimes w_2)
\end{align*}
is a $V$-module intertwining operator of type $\binom{W_3}{W_1\,W_2}$ and parity $p(m_1)+p(m_2)+p(m_3')$. This is the superalgebra generalization of a variant of \cite[Proposition 2.11]{ADL}. By Assumption 1, $\mathcal{Y}_{m_1,m_2}^{m_3'}$ is either $0$ or even, and in the second case $p(m_3')=p(m_1)+p(m_2)$. Thus we can write
\begin{equation*}
    \mathcal{Y}_{m_1,m_2}^{m_3'} =\sum_i c_i(m_3'\otimes m_1\otimes m_2)\cdot\mathcal{Y}_V^{(i)}
\end{equation*}
where the $\mathcal{Y}_V^{(i)}$ are basis elements of $I\binom{W_3}{W_1\,W_2}$ and $c_i: M_3'\otimes M_1\otimes M_2\rightarrow\mathbb{C}^{1\vert 0}$ are even linear maps.
The sum over $i$ is finite since $\cD$ is a locally finite category and thus $I\binom{W_3}{W_1\,W_2}\cong\mathrm{Hom}_\cD(W_1\boxtimes_V W_2,W_3)$ is finite dimensional.
We can then define an even linear map $\mathcal{Y}_U^{(i)}: M_1\otimes M_2\rightarrow M_3\lbrace x\rbrace$ by
\begin{equation*}
    \langle m_3',\mathcal{Y}_U^{(i)}(m_1,x)m_2\rangle = c_i(x^{L_U(0)}m_3'\otimes x^{-L_U(0)}m_1\otimes x^{-L_U(0)}m_2).
\end{equation*}
One can then use the definitions and the linear independence of the $\mathcal{Y}_V^{(i)}$ to show that each $\mathcal{Y}_U^{(i)}$ is an intertwining operator in $I\binom{M_3}{M_1\,M_2}$ and that $\mathcal{Y}=\sum_i\mathcal{Y}_U^{(i)}\otimes\mathcal{Y}_V^{(i)}$, where $\otimes$ is the tensor product in the category of superspaces thanks to the factor of $(-1)^{p(m_2)p(w_1)}$ in the definition of $\mathcal{Y}_U^{(i)}$. Thus \eqref{eqn:intw-op-iso} is an isomorphism as required. 

So far, we have shown that $\cC\boxtimes\cD$ is braided tensor equivalent to the vertex algebraic braided tensor category $\mathcal{E}$ of $U\otimes V$-modules 
Next, in case $\cC$ and $\cD$ have infinitely many simple modules, \cite[Theorem 1.1]{CMY1} easily generalizes to superalgebras (just replace vector spaces with vector superspaces and add sign factors as appropriate in \cite[Section 6]{CMY1}). So the direct limit completion ${\rm Ind}(\cC\boxtimes\cD)$
is also a vertex algebraic braided tensor category of $U\otimes V$-modules under the conditions of \cite[Theorem 1.1]{CMY1}. Because $\cC\boxtimes\cD$ is semisimple, ${\rm Ind}(\mathcal{\cC\boxtimes\cD})$ is just the category of arbitrary, possibly infinite, direct sums of $U\otimes V$-modules $M\otimes W$ for $M$ simple in $\cC$ and $W$ simple in $\cD$. Then the only non-trivial condition of \cite[Theorem 1.1]{CMY1} follows from Assumption 2, so ${\rm Ind}(\cC\boxtimes\cD)$ is a braided tensor category of $U\otimes V$-modules.



Finally, the proof of \cite[Theorem 3.2]{HKL} immediately generalizes to show that if $A$ is a commutative algebra in a braided tensor category of modules for a vertex operator superalgebra, then $A$ has a conformal vertex superalgebra structure (just add a sign factor in the proof that the commutativity of $A$ yields the super skew-symmetry property of a conformal vertex superalgebra). Thus given the braid-reversed equivalence $\tau$ of Assumption 3, the simple canonical algebra $A=\bigoplus_{X\in{\rm Irr}\,\mathcal{C}} X^*\otimes\tau(X)$ of \cite[Main Theorem 1]{CKM2} has the structure of a simple conformal vertex superalgebra extending $U\otimes V$.
\end{proof}

\begin{rema}
The map \eqref{eqn:DTP-iso} might not be an isomorphism if Assumption 1 does not hold. For example,
the parity involutions $p_M$ and $p_W$ of simple objects $M$ and $W$ in $\cC$ and $\cD$ define odd isomorphisms from $M$ and $W$ to their parity reversals $\Pi(M)$ and $\Pi(W)$, and then $p_M\otimes p_W: M\otimes W\rightarrow\Pi(M)\otimes\Pi(W)$ (the vector superspace tensor product) is an even isomorphism, although $\mathrm{Hom}_\cC(M,\Pi(M))$ and $\mathrm{Hom}_\cD(W,\Pi(W))$ are $0$ in general.
However, if Assumption 1 holds for $\cC$, then $M$ and $\Pi(M)$ cannot both be objects of $\cC$, and similarly for $\cD$.
\end{rema}

We now show that the assumptions of Theorem \ref{thm:superalgebra-gluing} hold for the categories of interest in this paper:

\begin{lemma}\label{lem:gluing-conditions}
    Assumptions 1 and 2 in Theorem \ref{thm:superalgebra-gluing} hold when $\cC$ and $\cD$ are either $\KL_k^{\rm ev}(\mathfrak{osp}_{1\vert 2n})$ or $\KL_\ell^{\rm ns}(\mathfrak{so}_{2n+1})$ for $k,\ell\in\mathbb{C}\setminus\mathbb{Q}$.
\end{lemma}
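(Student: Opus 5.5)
Assumption 1 will be checked directly on simple objects, and Assumption 2 by reducing to conformal weights and finiteness of fusion.

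\textbf{Assumption 1.} Let $M_k(\Lambda)$ and $M_k(\Lambda')$ be simple objects of $\KL_k^{\rm ev}(\mathfrak{osp}_{1\vert 2n})$, and let $f$ be a parity-homogeneous $V_k(\mathfrak{osp}_{1\vert 2n})$-homomorphism between them. By Schur's lemma, $f\neq 0$ forces $f$ to be an isomorphism, so $\Lambda=\Lambda'$. Then $f$ commutes with $\mathfrak{h}$, hence preserves weight spaces. Both modules are even, so the weight-$\Lambda$ highest weight vectors have the same parity $p(\Lambda)$, and $f$ sends one to a nonzero multiple of the other. Therefore $f$ is even. In the $\mathfrak{so}_{2n+1}$ case all modules are purely even, so there is nothing to prove.

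\textbf{Assumption 2.} Let $\mathcal{Y}$ be an intertwining operator of type $\binom{X}{M_1\otimes W_1\,\,M_2\otimes W_2}$, with $X=\bigoplus_j M^{(j)}\otimes W^{(j)}$ possibly infinite.
\begin{enumerate}
\item Project $\mathcal{Y}$ onto each summand $M^{(j)}\otimes W^{(j)}$. By the isomorphism \eqref{eqn:intw-op-iso}, proved in Theorem \ref{thm:superalgebra-gluing} using Assumption 1, which now holds, each projection lies in $I\binom{M^{(j)}}{M_1\,M_2}\otimes I\binom{W^{(j)}}{W_1\,W_2}$.
\item That space is nonzero only if $M^{(j)}$ is a summand of $M_1\boxtimes M_2$ and $W^{(j)}$ is a summand of $W_1\boxtimes W_2$.
\item By the Frenkel-Zhu-Li fusion rules in the proof of Theorem \ref{thm:ss-BTC}, these fusion products are finite direct sums. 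So only finitely many isomorphism types $M\otimes W$ receive a nonzero projection.
\end{enumerate}

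The remaining issue is multiplicity, since $X$ could contain infinitely many copies of one simple module $S=M\otimes W$.
\begin{enumerate}
\item Since $\mathcal{Y}$ is an intertwining operator, its image is a $U\otimes V$-submodule of $X$.
\item Its component in the $S$-isotypic part $S\otimes E$ (with $E$ a multiplicity space) has the form $S\otimes E_0$, where $E_0\subseteq E$ is spanned by the coefficients of the projections.
\item By the universal property, $\mathcal{Y}$ factors through $(M_1\otimes W_1)\boxtimes(M_2\otimes W_2)$, which is a finite direct sum by the $\cC\boxtimes\cD$ fusion rules. Hence $\dim E_0\le\dim I\binom{S}{M_1\otimes W_1\,\,M_2\otimes W_2}$, which is finite.
\end{enumerate}
To avoid circularity in the last step, I will instead use that $E_0$ is the image of a linear map from $\Hom(\text{fusion product}, S)$, which is finite-dimensional by \eqref{eqn:intw-op-iso}. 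This identifies the image of $\mathcal{Y}$ as a finite direct sum of simple $U\otimes V$-modules.

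\textbf{Main obstacle.} I expect the hardest step to be this multiplicity bound. It requires working at the level of intertwining operators, before an Ind-tensor category structure is available. Concretely, one must show that $\mathcal{Y}=\sum_i \iota_i\circ\mathcal{Y}_i$, with finitely many $\mathcal{Y}_i$ of finite-length type composed with inclusions $\iota_i$. My first attempt will be to pair $\mathcal{Y}$ with elements of the restricted dual of $E$ and apply \eqref{eqn:intw-op-iso} to each pairing. The finiteness of $I\binom{S}{M_1\otimes W_1\,\,M_2\otimes W_2}$ should then give a finite spanning set.
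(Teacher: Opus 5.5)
Your argument for Assumption 1 is the paper's. You add one useful precision: in an even module the $\mathfrak{h}$-weight determines the parity, so an odd map commuting with $\mathfrak{h}$ must vanish.

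For Assumption 2 you take a genuinely different route, and it works. The paper argues in one line. The modules $M_1\otimes W_1$ and $M_2\otimes W_2$ are $C_1$-cofinite, so by Miyamoto's Key Theorem \cite{Miy} the image of $\mathcal{Y}$ is $C_1$-cofinite. A $C_1$-cofinite module cannot be an infinite direct sum of simple modules.

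You instead use \eqref{eqn:intw-op-iso}, which needs only Assumption 1, together with finiteness of fusion to reduce to finitely many isotypic types $S$. You then bound multiplicities by linear algebra. You are right that the universal property is unavailable for the infinite target $X$. However, your replacement sentence has the map backwards: before finiteness is known, there is no natural map from $\Hom(\cdot,S)$ into the multiplicity space $E$.

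The clean version of what your last paragraph sketches goes as follows. Let $\pi_S$ be the projection onto the $S$-isotypic part $S\otimes E$. Then $\epsilon\mapsto(\id_S\otimes\epsilon)\circ\pi_S\circ\mathcal{Y}$ is a linear map from the dual of $E$ (full or finitely supported) into $I\binom{S}{M_1\otimes W_1\;M_2\otimes W_2}$. Its kernel $K$ therefore has codimension at most $d=\dim I\binom{S}{M_1\otimes W_1\;M_2\otimes W_2}<\infty$. Write any coefficient of $\pi_S\circ\mathcal{Y}$ as $\sum_a s_a\otimes e_a$ with the $s_a$ linearly independent. Then every $e_a$ lies in the pre-annihilator $K_\perp$, which has dimension at most $d$. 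So it is $E_0^*$ that embeds into the finite-dimensional space of intertwining operators, not $E_0$ that is an image of it. Hence the image of $\mathcal{Y}$ lies in a finite direct sum of simple modules, and being a submodule, it is itself a finite direct sum.

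The paper's route is shorter, but it relies on an external theorem and on $C_1$-cofiniteness of the modules involved. Yours is elementary and uses only ingredients already present in the proof of Theorem \ref{thm:superalgebra-gluing}. In fact it shows that Assumption 2 follows from Assumption 1 together with local finiteness of $\cC$ and $\cD$, with no cofiniteness input at all.
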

\begin{proof}
Assumption 1 clearly holds for $\KL_\ell^{\rm ns}(\mathfrak{so}_{2n+1})$ (view $V_\ell(\mathfrak{so}_{2n+1})$ as a vertex operator superalgebra with zero odd part, so that objects of $\KL_\ell^{\rm ns}(\mathfrak{so}_{2n+1})$ can be $\mathbb{Z}/2\mathbb{Z}$-graded with zero odd part). For $\KL_k^{\rm ev}(\mathfrak{osp}_{1\vert 2n})$, any even or odd non-zero $V_k(\mathfrak{osp}_{1\vert 2n})$-homomorphism $M_k(\Lambda_1)\rightarrow M_k(\Lambda_2)$ commutes with the Cartan subalgebra of $\mathfrak{osp}_{1\vert 2n}$ and preserves highest-weight vectors, forcing $\Lambda_1=\Lambda_2$, so that we get an (even) non-zero multiple of the identity.

    For Assumption 2, let $\mathcal{Y}$ be an intertwining operator of type $\binom{X}{X_1\,X_2}$ where $X_1$ and $X_2$ are simple objects of $\cC\boxtimes\cD$. Then $X_1$ and $X_2$ are
    $C_1$-cofinite $U\otimes V$-modules for $U,V\in\lbrace V_k(\mathfrak{osp}_{1\vert 2n}), V_\ell(\mathfrak{so}_{2n+1})\rbrace$, so the image of $\mathcal{Y}$ is $C_1$-cofinite by \cite[Key Theorem]{Miy} and hence can only be a finite (rather than infinite) direct sum of simple $U\otimes V$-modules.
\end{proof}

Now we apply the above results to construct new conformal vertex algebras from braid-reversed tensor equivalences between the categories $\KL_k^{\rm ev}(\mathfrak{osp}_{1\vert 2n})$ and $\KL_\ell^{\rm ns}(\mathfrak{so}_{2n+1})$ for $k,\ell\in\mathbb{C}\setminus\mathbb{Q}$. In general, if $\cC$ is a category of non-spinorial $\mathfrak{so}_{2n+1}$ type with braiding eigenvalue triple $(q,-q^{-1},\pm q^{-2n})$, then the braid-reversed category $\cC^{\rm rev}$ is still of non-spinorial $\mathfrak{so}_{2n+1}$ type, with braiding eigenvalue triple $(q^{-1},-q,\pm q^{2n}$). Thus from \eqref{equivalences}, we also get braided equivalences
\begin{equation*}
\begin{split}
& \KL_k^{\rm ev}(\mathfrak{osp}_{1|2n})^{\rm rev} \cong \KL_{k'}^{\rm ev}(\mathfrak{osp}_{1|2n}), \\
& \KL_k^{\rm ev}(\mathfrak{osp}_{1|2n})^{\rm rev} \cong \KL_\ell^{\rm ns}(\mathfrak{so}_{2n+1}),\\
&\KL_\ell^{\rm ns}(\mathfrak{so}_{2n+1})^{\rm rev}\cong \KL_{\ell'}^{\rm ns}(\mathfrak{so}_{2n+1})
\end{split}
\end{equation*}
which match simple objects labeled by $\Lambda\in P^+$ for
\begin{equation*}
    \begin{split}
    \frac{1}{2k+2n+1} + \frac{1}{2k'+2n+1} & = 0  \mod 2\mathbb Z,\\
         \frac{1}{2k+2n+1} + \frac{1}{\ell+2n-1} &= 1  \mod 2\mathbb Z, \\
          \frac{1}{\ell+2n-1} + \frac{1}{\ell'+2n-1} &= 0  \mod 2\mathbb Z, \\
    \end{split}
\end{equation*}
respectively. Now Theorem \ref{thm:superalgebra-gluing} and Lemma \ref{lem:gluing-conditions} imply:
\begin{theorem}\label{thm:mixed-kernel-VOAs}
    The following are simple conformal vertex (super)algebras extending $V_k(\mathfrak{osp}_{1|2n})\otimes V_{k'}(\mathfrak{osp}_{1|2n})$, $V_k(\mathfrak{osp}_{1|2n})\otimes V_{\ell}(\mathfrak{so}_{2n+1})$, and $V_\ell(\mathfrak{so}_{2n+1})\otimes V_{\ell'}(\mathfrak{so}_{2n+1})$, respectively:
    \begin{enumerate}
        \item For $k\in\mathbb{C}\setminus\mathbb{Q}$ and $m\in2\mathbb{Z}$,
        \begin{equation*}
         A[\mathfrak{osp}_{1|2n}, \mathfrak{osp}_{1|2n}, m, k] = \bigoplus_{\Lambda \in P^+} M_k(\Lambda) \otimes M_{k'}(\Lambda)   
        \end{equation*}
        where $k'\in\mathbb{C}\setminus\mathbb{Q}$ is defined by $\frac{1}{2k+2n+1} + \frac{1}{2k'+2n+1} = m$.

        \item For $k\in\mathbb{C}\setminus\mathbb{Q}$ and $p\in2\mathbb{Z}+1$,
        \begin{equation*}
         A[\mathfrak{osp}_{1|2n}, \mathfrak{so}_{2n+1}, p, k] = \bigoplus_{\Lambda \in P^+} M_k(\Lambda) \otimes N_{\ell}(\Lambda) 
        \end{equation*}
        where $\ell\in\mathbb{C}\setminus\mathbb{Q}$ is defined by $\frac{1}{2k+2n+1} + \frac{1}{\ell+2n-1} = p$.

        \item For $\ell\in\mathbb{C}\setminus\mathbb{Q}$ and $m\in2\mathbb{Z}$,
        \begin{equation*}
         A[\mathfrak{so}_{2n+1}, \mathfrak{so}_{2n+1}, m, \ell]^{\rm ns} = \bigoplus_{\Lambda \in P^+} N_\ell(\Lambda) \otimes N_{\ell'}(\Lambda)  
        \end{equation*}
        where $\ell'\in\mathbb{C}\setminus\mathbb{Q}$ is defined by $\frac{1}{\ell+2n-1} + \frac{1}{\ell'+2n-1} = m$.
    \end{enumerate}
    Moreover, $A[\mathfrak{osp}_{1|2n}, \mathfrak{osp}_{1|2n}, m, k]$ and $A[\mathfrak{so}_{2n+1}, \mathfrak{so}_{2n+1}, m, \ell]^{\rm ns}$ are $\mathbb{Z}$-graded, while $A[\mathfrak{osp}_{1|2n}, \mathfrak{so}_{2n+1}, p, k]$ is $\frac{1}{2}\mathbb{Z}$-graded.
\end{theorem}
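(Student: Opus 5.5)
The plan is to apply Theorem \ref{thm:superalgebra-gluing} three times, taking $\cC$ and $\cD$ to be the appropriate pair of categories among $\KL_k^{\rm ev}(\mathfrak{osp}_{1|2n})$, $\KL_{k'}^{\rm ev}(\mathfrak{osp}_{1|2n})$, $\KL_\ell^{\rm ns}(\mathfrak{so}_{2n+1})$ and $\KL_{\ell'}^{\rm ns}(\mathfrak{so}_{2n+1})$. Assumptions 1 and 2 are exactly Lemma \ref{lem:gluing-conditions}. Local finiteness and semisimplicity come from Theorem \ref{thm:ss-BTC} and its $\mathfrak{so}_{2n+1}$ analogue. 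So the only remaining input is Assumption 3: a braid-reversed tensor equivalence $\tau$.

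To get $\tau$, I first check that the relations defining $k'$, $\ell$ and $\ell'$ produce irrational levels. For example, $\frac{1}{2k'+2n+1}=m-\frac{1}{2k+2n+1}$ is irrational because $k$ is irrational. Next I compare braiding eigenvalue triples, using \eqref{braidingaffineosp}, \eqref{braidingaffineso} and the observation that reversing the braiding of a category with triple $(a_{2\omega_1},a_{\omega_2},a_0)$ produces the triple $(a_{2\omega_1}^{-1},a_{\omega_2}^{-1},a_0^{-1})$.

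The three cases go as follows.
\begin{itemize}
\item In case 1, let $x=\frac{1}{2k+2n+1}$ and $x'=\frac{1}{2k'+2n+1}$. Then $x'=m-x$ with $m$ even, so $e^{\pi i x'}=e^{-\pi i x}$, and the triple of $\KL_{k'}^{\rm ev}$ is the inverse of that of $\KL_k^{\rm ev}$. The same holds for $a_0=e^{-2\pi i n x}$.
\item In case 2, let $y=\frac{1}{\ell+2n-1}$, so $y=p-x$ with $p$ odd. Then $e^{\pi i y}=-e^{-\pi i x}$, and the triple of $\KL_\ell^{\rm ns}$ becomes $(-e^{-\pi i x},e^{\pi i x},e^{2\pi i n x})$. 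This is exactly the inverse of $(-e^{\pi i x},e^{-\pi i x},e^{-2\pi i nx})$, since $e^{-2\pi i n p}=1$.
\item Case 3 is identical to case 1.
\end{itemize}
The eigenvalues are not roots of unity because $x$ is irrational. Theorem \ref{maintheoremtubawenzl}, applied to $\cC^{\rm rev}$ (still of non-spinorial type) and $\cD$, then gives a braided equivalence $\tau:\cC^{\rm rev}\to\cD$ with $\tau(X(\Lambda))\cong\widetilde X(\Lambda)$. The only delicate point is making sure the labelings match, which that theorem guarantees. This is essentially the list of equivalences already displayed before the statement.

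With $\tau$ in hand, Theorem \ref{thm:superalgebra-gluing} gives a simple conformal vertex superalgebra $\bigoplus_\Lambda X(\Lambda)^*\otimes\tau(X(\Lambda))$. To match the form in the statement, I check that the contragredient of $L_k(\Lambda)$ is $L_k(\Lambda)$. The finite-dimensional module $L(\Lambda)$ is self-dual because $-w_0=\mathrm{id}$ in type $B$, and since the top level of $L_k(\Lambda)^*$ is $L(\Lambda)^*\cong L(\Lambda)$, Weyl modules at irrational level are determined by their top levels. In the $\mathfrak{osp}$ case I also have to confirm that the dual is even rather than a parity reversal. This holds because $\KL^{\rm ev}$ is rigid (Theorem \ref{thm:ss-BTC}) and closed under duals: the dual of the simple object $M_k(\Lambda)$ lies in $\KL^{\rm ev}$ and is simple with top level $L(\Lambda)$, so it is $M_k(\Lambda)$. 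The algebra is therefore $\bigoplus_\Lambda L_k(\Lambda)\otimes L'(\Lambda)$ as claimed.

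For the grading statement, the conformal weight of $L_k(\Lambda)\otimes L'(\Lambda)$ is congruent mod $\mathbb{Z}$ to $h+h'$, where $h$ and $h'$ are the lowest conformal weights of the two factors. Both are of the form $\frac{c_\Lambda}{2}\cdot(\text{level factor})$ with $c_\Lambda=(\Lambda|\Lambda+2\rho)_{\mathfrak{so}}$, and the level factors sum to $m$ or $p$ by the relations above. So $h+h'=\frac{c_\Lambda m}{2}$ or $\frac{c_\Lambda p}{2}$. I then check that $c_\Lambda\in\mathbb{Z}$ for $\Lambda\in P^+$; this follows because $P^+\subset\mathbb{Z}^n$ and $2\rho\in\mathbb{Z}^n$. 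Hence $h+h'\in\mathbb{Z}$ when $m$ is even, and $h+h'\in\frac12\mathbb{Z}$ in case 2. The expected main obstacle is nothing deep, just careful bookkeeping of signs and of the evenness of duals.
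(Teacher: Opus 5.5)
Your proposal is correct and follows the paper's argument: Theorem \ref{thm:superalgebra-gluing} together with Lemma \ref{lem:gluing-conditions}, with the braid-reversed equivalences obtained by inverting the braiding eigenvalue triples and applying Theorem \ref{maintheoremtubawenzl}, and the grading read off from the lowest conformal weights $\frac{m}{2}(\Lambda|\Lambda+2\rho)$ and $\frac{p}{2}(\Lambda|\Lambda+2\rho)$ with $(\Lambda|\Lambda+2\rho)\in\mathbb{Z}$. Your explicit checks that $k'$, $\ell$, $\ell'$ are irrational, that the eigenvalues are not roots of unity, and that $L_k(\Lambda)^*\cong L_k(\Lambda)$ holds inside the even category are details the paper leaves implicit.
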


\begin{proof}
    The grading assertions follow because the definitions of $h(M_k(\Lambda))$ and $h(N_k(\Lambda))$ in Section \ref{aff} imply that the lowest conformal weight of the summand labeled by $\Lambda$ in parts 1, 2, and 3 of the theorem is $\frac{m}{2}(\Lambda\vert\Lambda+2\rho)$, $\frac{p}{2}(\Lambda\vert\Lambda+2\rho)$, and $\frac{m}{2}(\Lambda\vert\Lambda+2\rho)$, respectively, where $(\Lambda\vert\Lambda+2\rho)\in\mathbb{Z}$.
\end{proof}

        

\end{document}